\documentclass[pdflatex,sn-mathphys-num]{sn-jnl}

\usepackage{graphicx}%
\usepackage{multirow}%
\usepackage{amsmath,amssymb,amsfonts}%
\usepackage{amsthm}%
\usepackage{mathrsfs}%
\usepackage[title]{appendix}%
\usepackage{xcolor}%
\usepackage{textcomp}%
\usepackage{manyfoot}%
\usepackage{booktabs}%
\usepackage{longtable}%
\usepackage{algorithm}%
\usepackage{algorithmicx}%
\usepackage{algpseudocode}%
\usepackage{listings}%
\usepackage{subcaption}
\usepackage{tikz}
\usepackage{url}
\usepackage{comment}

\usepackage{todonotes}
\newcommand{\Real}{\mathbb{R}}
\newcommand{\To}{\rightarrow}
\numberwithin{equation}{section}
\theoremstyle{thmstyleone}%
\newtheorem{theorem}{Theorem}
\theoremstyle{thmstyletwo}%
\newtheorem{remark}{Remark}%

\theoremstyle{thmstylethree}%
\newtheorem{lemma}[theorem]{Lemma}

\begin{document}

\title[Global Optimization Out-of-the-Box]{Out-of-the-Box Global Optimization for Packing Problems: New Models and Improved Solutions}


\author[1,2]{\fnm{Timo} \sur{Berthold}}\email{timoberthold@fico.com}

\author[3]{\fnm{Dominik} \sur{Kamp}}\email{kamp@zib.de}

\author[3]{\fnm{Gioni} \sur{Mexi}}\email{mexi@zib.de}

\author[2,3]{\fnm{Sebastian} \sur{Pokutta}}\email{pokutta@zib.de}

\author[4]{\fnm{Imre} \sur{P\'olik}}\email{imre@polik.net}

\affil[1]{\orgname{Fair Isaac Deutschland GmbH}, \orgaddress{\country{Germany}}}

\affil[2]{\orgdiv{Institut f\"ur Mathematik}, \orgname{Technische Universit\"at Berlin}, \orgaddress{\country{Germany}}}

\affil[3]{\orgname{Zuse Institute Berlin}, \orgaddress{\country{Germany}}}

\affil[4]{\orgname{Fair Isaac Corporation}, \orgaddress{\country{USA}}}

\abstract{
Recent LLM-driven discoveries have renewed interest in geometric packing problems.
In this paper, we study several classes of such packing problems through the lens of modern global nonlinear optimization.
Starting from comparatively direct nonlinear formulations, we consider packing circles in squares and fixed-perimeter rectangles, packing circles into minimum-area ellipses, packing regular polygons into regular polygons, and packing Platonic solids into Platonic solids.
For ellipse packing, we derive a novel containment formulation based on the S-lemma.
For polygon and Platonic solid packing, we develop compact non-overlap formulations based on the Farkas lemma and study several natural modeling variants computationally.

Using off-the-shelf global optimization solvers, namely FICO Xpress and SCIP, we obtain numerous new incumbent solutions as well as first solutions for previously unstudied variants without any problem-specific solver modifications beyond writing down the models.
Our computational results analyze the impact of various formulation choices.

Beyond the individual packing results, the paper illustrates a broader point.
It provides further evidence that global nonlinear optimization has matured into an increasingly practical model-and-solve technology for highly nonconvex problems.
\keywords{global nonlinear optimization \and geometric packing \and circle packing \and polygon packing \and S-lemma \and Farkas lemma}
}


\maketitle

\section{Introduction}\label{sec:intro}

\subsection{Motivation}

The rapid progress in global optimization technology over the past decade has substantially expanded the range of nonlinear, nonconvex problems that can be solved reliably by general-purpose optimization software.
State-of-the-art academic solvers such as SCIP~\cite{SCIPOptSuite10} and commercial solvers such as FICO\textsuperscript{\textregistered} Xpress~\cite{BelottiBertholdGallyGottwaldPolik2025} combine spatial branch-and-bound, automatic linearization and convexification, sophisticated presolving and propagation, and increasingly powerful primal heuristics~\cite{Berthold2014,berthold2018computational,Berthold_Lodi_Salvagnin_2025}.
As a consequence, classes of nonlinear optimization problems that would only recently have been considered computationally far beyond the reach of generic solvers can now often be solved to proven global optimality, or at least to very high-quality primal solutions accompanied by meaningful dual bounds.

At the same time, recent developments in algorithm design based on Large Language Models (LLMs) have drawn renewed attention to long-standing geometric and combinatorial problems that admit natural nonlinear optimization formulations.
In particular, DeepMind presented the AlphaEvolve framework~\cite{georgiev2025mathematical,novikov2025alphaevolve}, which uses LLM-generated code in an evolutionary search to produce high-quality solutions for an extensive set of mathematical problems, including variants of circle packing and hexagon packing.
These developments raise a natural question: to what extent can state-of-the-art global optimization solvers match, or even surpass, such discovered algorithms when the underlying problems are formulated directly as optimization models and solved with off-the-shelf nonlinear optimization technology?\footnote{Curiously, in a study with OpenEvolve~\cite{openevolve}, a project aimed to replicate the AlphaEvolve project using open-source tools, the LLM converged to using an optimization approach~\cite{circlesSLSQP}, namely a Sequential Least Squares Programming formulation with a local solver implemented in SciPy~\cite{SciPySLSQP,virtanen2020scipy}.} 

Packing problems provide a particularly appealing problem class for studying this question.
They have a long history in mathematics and optimization, with a track record of best-known solutions~\cite{friedmanPacking}, and they feature highly nonconvex feasible regions.
At the same time, they admit relatively intuitive nonlinear formulations, making them well-suited for studying the interaction between modeling choices and solver performance.
In this sense, packing problems are not only interesting in their own right, but also serve as a neat showcase for understanding the current capabilities and limitations of modern global nonlinear optimization.

\subsection{Contributions}

In this paper, we revisit several families of geometric packing problems from a ``model-and-solve'' perspective.
We begin with straightforward nonlinear formulations for packing circles in squares and fixed-perimeter rectangles, and then move to more involved geometric settings.
For packing circles into ellipses, we derive a novel containment model based on the S-lemma~\cite{polik07slemma}.
For regular polygon packing, we develop a compact non-overlap formulation based on the Farkas lemma~\cite{Farkas1902}, and we then extend this modeling idea to the three-dimensional setting of packing Platonic solids into Platonic solids.
To the best of our knowledge, this yields the first systematic optimization study of that broader problem class beyond the special case of packing cubes into cubes.

Our contributions are threefold:
\begin{enumerate}
\item On the modeling side we introduce new formulations for ellipse, polygon, and Platonic-solid packing, including a practical use of the S-lemma for circle-in-ellipse containment and a Farkas-based separation framework for polygonal and polyhedral non-overlap constraints.
\item On the computational side we obtain numerous new incumbent solutions, including for the well-studied topics of circle and cube packing, as well as first solutions for variants that appear not to have been studied before.
Notably, all computational results are obtained using off-the-shelf global optimization solvers, without any problem-specific solver modifications or specialized heuristics.
\item On the methodological side we show that formulation choices matter. For packing polygons and Platonic solids we compare several natural model variants and observe that redundant strengthening constraints can have a significant positive computational impact, while seemingly reasonable symmetry-breaking constraints may, in fact, degrade performance.
\end{enumerate}
Parts of this paper build on our earlier study in~\cite{berthold2026global}.
In particular, the circle-packing results in Section~\ref{sec:circlepacking} and a subset of the polygon-packing results in Section~\ref{sec:polygonpacking} were already reported there.
The present paper goes substantially beyond that earlier work by broadening the range of packing classes considered, introducing new modeling ideas for ellipses and higher-dimensional polyhedral packing, and adding a systematic empirical study of formulation variants.

The central message of this paper is not restricted to packing.
Rather, our computational study provides further evidence that global nonlinear optimization is increasingly becoming an out-of-the-box technology: transparent mathematical models, combined with modern general-purpose solvers, produce highly competitive results on difficult nonconvex benchmark problems that have recently attracted attention in the context of LLM-driven discovery. See, e.g., \cite{sudermannmerx2026} for an excellent recent study on the Heilbronn problem.

\subsection{Organization of the paper}

The remainder of the paper is organized as follows.
Section~\ref{sec:background} reviews relevant background on packing problems and global nonlinear optimization.
Section~\ref{sec:details} discusses implementation aspects, including solution polishing and the computational setup.
Section~\ref{sec:circlepacking} considers packing circles with variable radii into squares and fixed-perimeter rectangles.
Section~\ref{sec:ellipsepacking} introduces the ellipse-packing model based on the S-lemma.
Section~\ref{sec:polygonpacking} presents the generalized polygon-packing formulation including several modeling variants and compares those computationally.
Section~\ref{sec:platonicpacking} extends this model to Platonic solid packing in three dimensions. 
The model can be easily extended to higher dimensions, but we restrict our computational analysis and visualizations to the three-dimensional case.
Finally, Section~\ref{thats_it_folks} concludes with lessons learned, limitations, and directions for future work.

\section{Background}\label{sec:background}
This section provides an overview and the necessary background on geometric packing problems and global nonlinear optimization. 
The interaction between these two areas is central to this work, as packing models expose nonconvex structure that is amenable to modern global nonlinear solvers.

\subsection{Packing problems}\label{subsec:packing}
Packing problems ask for the placement of a set of objects into a container subject to feasibility constraints (typically non-overlap and containment), while optimizing an objective such as minimizing container size, minimizing the number of bins, or maximizing packed volume. 
They have a long history in mathematics. The study of sphere packing dates back at least to Kepler’s 1611 conjecture that a face-centered cubic arrangement yields the densest packing of equal spheres in three-dimensional space, a claim that was proven nearly four centuries later by Hales~\cite{Hales2005Kepler}.
The study of packing circles in polygonal regions dates back almost 200 years~\cite{bolyai1832tentamen}.
Polygon packing has been studied at least since the 1950s, see~\cite{FejesToth1964} for an early overview.
Today, the study of packing spans classical geometry (e.g., densest sphere/circle packings) and modern operations research (bin packing, strip packing, nesting).
There are systematic problem typologies clarifying structural variants and modeling assumptions, enabling principled algorithm design and benchmarking; see the widely used classification in \cite{WascherHaussnerSchumann2007}.

In parallel, the community established benchmark repositories to track progress and to compare methods across heterogeneous variants, including early integrated libraries such as PackLib2 \cite{FeketeVanderVeen2007} for multi-dimensional packing problems and 2DPackLib for two-dimensional packing~\cite{IoriDeLimaMartelloMonaci2022}.
Many packing and distance-constrained geometry models can be written using Euclidean norm expressions (e.g., $\lVert x_i-x_j\rVert_2 \ge r_i+r_j$ for non-overlap), yielding nonconvex nonlinear constraints with substantial symmetry.  EuclidLib~\cite{kuznetsov2024convexification} was introduced to capture such nonconvex, Euclidean-norm-driven optimization instances across various applications, and has become a commonly used benchmarking set for global optimization research. Some of its instances have also been included in MINLPLib~\cite{minlplib}. For a comprehensive overview of Euclidean Distance Geometry, see the survey by Liberti et al.~\cite{Liberti2014EDG}.

For circle packing in particular, deterministic global optimization has long been discussed as a viable route to provable solutions and strong bounds, with comprehensive early computational evidence and application discussion in \cite{locatelli2002packing,CastilloKampasPinter2008}.
The Eternity puzzle~\cite{Eternity} is a packing problem in which triangle-based nonconvex polygon pieces must be arranged to tile a prescribed board. It drew a lot of public attention because a \pounds 1 million prize was offered (and later awarded to two mathematicians) for a complete solution.
Packing models arise throughout manufacturing and logistics: cutting stock and trim-loss reduction (paper, metal, wood, glass, textiles), palletization and container loading, warehouse storage and automated fulfillment, and layout problems in VLSI.
From an optimization standpoint, these applications frequently impose additional constraints beyond pure non-overlap (e.g., grouping, balancing, stability, incompatibilities), which induce mixed-integer structure even when the geometry is continuous~\cite{WascherHaussnerSchumann2007}.

Recent developments in solving packing problems include the work of~\cite{khajavirad2024circle}, which provides a theoretical comparison of convexification techniques, in particular single-row LP relaxations, multi-row LP relaxations, and SDP relaxation, for non-overlap constraints for circles and spheres modeled as nonconvex QCQPs, and their implications for bound quality and algorithm design. Google DeepMind introduced the AlphaEvolve framework~\cite{georgiev2025mathematical,novikov2025alphaevolve}, which synthesizes code through LLMs to obtain new best-known solutions for a broad range of mathematical optimization tasks, including circle and hexagon packings as well as minimum-distance point configurations. Shortly after, in work that preceded the present article, we improved some of the AlphaEvolve results by using global nonlinear optimization techniques~\cite{berthold2026global,FicoBlog}.
The field is currently very active, with at least six groups contributing new best-known solutions to the benchmark collections curated and continuously updated at~\cite{friedmanPacking}.

\subsection{Nonlinear optimization}\label{subsec:nlp}

Global nonlinear optimization concerns optimization problems with nonlinear, often nonconvex objectives and constraints, where the goal is to compute solutions that are provably globally optimal.
Formally, a \emph{nonlinear optimization problem (NLP)} is defined as:
\begin{align}
\label{nlp_def}    \min f(x)&\\    
\notag    g_k(x)&\leq 0,\ k = 1,\dots,n\\
\notag    \ell\leq x&\leq u,
\end{align} where $x\in\mathbb{R}^n$, $f(x), g_k:\mathbb{R}^{n}\rightarrow\mathbb{R}$ are factorable functions, and all variable bounds $\ell,u\in\bar{\mathbb{R}}^n := \left(\mathbb{R}\cup\{\pm \infty\}\right)^n$. The situation becomes even more challenging in the presence of integrality constraints, leading to \emph{mixed-integer nonlinear optimization (MINLP)}.

From a complexity-theoretic perspective, NLP is fundamentally hard: continuous nonconvex quadratic optimization is $\mathrm{NP}$-hard~\cite{Sahni1974}, and general nonlinear optimization is undecidable~\cite{Matijasevic1970}.
Algorithmically, modern global solvers extend branch-and-bound and branch-and-cut frameworks from mixed-integer linear programming to nonlinear settings, a central ingredient being the construction of tight convex relaxations.
The seminal work of McCormick introduced convex under- and overestimators for bilinear and factorable functions~\cite{McCormick1976}, forming the foundation of spatial branch-and-bound.
RLT~\cite{SheraliAdams1990} and semidefinite relaxations~\cite{anstreicher2009semidefinite} represent further systematic convexification approaches.

These methodological advances, as well as the comprehensive work on other branch-and-bound sub-algorithms such as domain propagation~\cite{Couenne,GleixnerBertholdMuelleretal.2016}, branching rules~\cite{Couenne,Vigerske2012,berthold2026learning}, primal heuristics~\cite{Berthold2014,Berthold_Lodi_Salvagnin_2025}, and conflict analysis~\cite{berthold2020conflict}, are reflected in today’s solver ecosystem, see~\cite{BertholdGleixnerHeinzVigerske2012} for a computational study.
Dedicated global solvers such as Baron~\cite{TawarmalaniSahinidis2002,TaSa04} and Antigone~\cite{MisenerFloudas2014}, and open-source frameworks such as SCIP~\cite{Achterberg2009,SCIPOptSuite10} implement spatial branch-and-bound and use external LP, MIP, and/or local NLP solvers.
Those works have paved the way for a second wave of global solvers in which existing commercial MIP solvers have been extended to incorporate global optimization capabilities for general factorable NLPs and MINLPs. This was championed by FICO Xpress in 2022~\cite{BelottiBertholdGallyGottwaldPolik2025}, followed by Gurobi and Hexaly in 2023 and 2024, respectively.

\section{Implementation details}\label{sec:details}

Each of the following sections will have its own set of computational results. Here we describe the common computational setup. In the paper we will list only objective values and will show a few representative or interesting solutions. There is an online supplement to the paper at \url{https://github.com/DominikKamp/Packing}, including all the models, numerical solutions and visualizations.

\subsection{Solvers}

We used SCIP~10.0.0 and FICO Xpress~9.8.0 to run all of our experiments. These represent the state-of-the-art in open source and commercial global optimization, respectively. We will not report solution times or comparisons between the two solvers, as that is not the point of the study.

\subsection{Multistart vs Global optimization}

Although for most problems we could just run a global optimization solve, it was often better to first run a local optimization solve started from random starting points, and then input this solution into the global solve. Our typical solves were with a time limit of 10,000s preceded by a 5,000s multistart rampup, keeping the best solution across all runs. For the formulation comparisons, reduced time limits of 1,000s preceded by a 500s multistart rampup were used.

\subsection{Numerical Thresholds and Solution Polishing}\label{sec:polishing}

Nonlinear optimization solvers produce solutions up to a given relative feasibility tolerance, typically $10^{-6}$, meaning constraints may be satisfied only approximately.
For safe claims about improvements over previously known results we require exactly feasible solutions.
To achieve this, we employ a two-level approach.

First, all optimization solves used a tighter feasibility tolerance of $\varepsilon = 10^{-8}$ for strict separation of inner elements and containment into the outer element.
For polygon and Platonic solid packing, the circumradius of each inner element is scaled outward by a factor $1 + \varepsilon / \rho_m$ in the containment constraints, and the inradius is increased by $\varepsilon / 2$ in the separation constraints.
The two different scalings ensure that the absolute geometric margin is the same~($\varepsilon$) between an inner element and the outer boundary as between two inner elements, so that any sufficiently precise solution with respect to solver tolerances is exactly feasible.
For circle packing, radii in containment and overlap constraints are scaled by $1 + 2\varepsilon$ and $1 + \varepsilon$, respectively, serving the same purpose.

Second, every solution is post-processed using arbitrary-precision arithmetic to eliminate remaining margins while ensuring exact feasibility to pass the given double-precision verifier.
For polygon and Platonic solid packing, the rotation angles remain untouched and all center coordinates are scaled uniformly with fixed origin by a factor $\sigma \geq 0$.
The scale~$\sigma$ is computed as the minimum value that eliminates all pairwise overlaps, determined via the separating axis theorem using edge normals in two dimensions and face normals together with edge-edge cross products in three dimensions.
Additionally, downscaling towards the origin is restricted by the containment condition for the given~$R$ to avoid unnecessary objective increase.
The circumradius~$R$ is then recomputed from the scaled configuration using projections of the inner elements on the face normals of the outer element.
A relative safety margin of $10^{-14}$ is applied to account for numerical rounding errors.
For circle packing, the center coordinates are kept fixed and the radii are adjusted in two phases.
First, the maximum uniform absolute radius increase~$\delta$ that preserves all non-overlap and boundary constraints is computed.
Then, each circle's radius is individually increased in order of increasing current radius, each time consuming the slack of its tightest remaining constraint.
An absolute safety margin of~$10^{-15}$ is applied to account for numerical rounding errors.

To clean up the circle packing instances we added constraints to exploit the (conjectured) structure of the solution: enforcing the apparent symmetry, rounding some values to the coordinate axes, and setting some constraints to equality (to enforce touching objects). We then re-solved the problem to see if these assumptions were correct and the same solution could still be obtained. In all cases this turned out to be the true. A similar, but more elaborate version of this scheme, employing Gr\"{o}bner bases to obtain the coordinates in closed form, is used in \cite{sudermannmerx2026}.

\subsection{Computational Setup}

Experiments were run single-threaded and non-exclusively on a 48-core Intel Xeon Gold 6342 CPU with 1,024\,GB RAM\@.
Models were generated in nl file format using PySCIPOpt and solved with SCIP~10.0.0 and FICO Xpress~9.8.0.
For the best results reported in the online supplement we applied five solver setups: SCIP with tolerance~$10^{-8}$, Xpress seeds~1--3 with tolerance~$10^{-8}$, and Xpress with tolerance~$10^{-6}$.

\section{Packing Circles inside a Square or a Rectangle}\label{sec:circlepacking}

Among circle packing problems, one of the most studied variants concerns packing a fixed number of unit circles into the smallest possible square; a good survey is given by Peikert~\cite{peikert2007packing}. For most instances of up to twenty circles, exact optimal packings are known and can be derived using algebraic geometry techniques based on polynomial systems and Gröbner basis computations; see, e.g.,~\cite{buchberger1976theoretical}.

In this section, we investigate a variant for which it is harder to prove optimality since both the positions and radii of the circles are decision variables. Specifically, we consider packing circles of variable radii into a unit square, or, in a mild relaxation, into a rectangle of perimeter four, while maximizing the sum of their radii.
Best-known solutions are mostly due to Cantrell~\cite{friedman2012circle}, but optimality proofs are only known for some trivial cases.

\subsection{Optimization model}

A key advantage of mathematical optimization approaches is that the optimization model can be changed easily. This facilitates exploring related problems by simply adding or modifying constraints, as illustrated by the two problem variants below.

Given a positive integer $n$, let $\mathcal{N} = \{1, 2, \ldots, n\}$ denote the set of circles. We define the
following decision variables:
\begin{description}
\item[$(x_i, y_i) \in \mathbb{R}^2$:] coordinates of the center of circle $i \in \mathcal{N}$
\item[$r_i \in \mathbb{R}_+$:] radius of circle $i \in \mathcal{N}$
\item[$\alpha \in \mathbb{R}_+$:] width of the rectangle
\end{description}
We consider the problem of packing circles in a rectangle with fixed perimeter
$P = 4$ and variable side lengths. We introduce a decision variable $\alpha$
representing the width and set the height to $H = \frac{P}{2} - \alpha = 2 - \alpha$.
We can assume without loss of generality that $\alpha \leq 1$ (i.e., $\alpha$ is
the shorter side). The optimization problem can be formulated as:
\begin{subequations}\label{eq:circlesquare}
\begin{alignat}{2}
    \max\  \sum_{i \in \mathcal{N}} r_i &
    \label{eq:circle-obj} && \\[0.3em]
    \text{s.t.}\quad
     r_i \le x_i &\le \alpha - r_i,
    &&\textrm{\quad for all } i \in \mathcal{N}
    \label{eq:circle-bound-x} \\
    r_i \le y_i &\le (2-\alpha) - r_i,
    &&\textrm{\quad for all } i \in \mathcal{N}
    \label{eq:circle-bound-y} \\
     (x_i - x_j)^2 + (y_i - y_j)^2 &\ge (r_i + r_j)^2,
    &&\textrm{\quad for all } i,j \in \mathcal{N},\ i<j
    \label{eq:circle-nonoverlap} \\
     0 \le r_i &\le \tfrac{\alpha}{2},
    &&\textrm{\quad for all } i \in \mathcal{N}
    \label{eq:circle-radius-bound} \\
    0 < \alpha &\le 1
    \label{eq:circle-width} &&
\end{alignat}
\end{subequations}
The objective \eqref{eq:circle-obj} is to maximize the sum of all radii.
Note that this is fundamentally different from maximizing the total area covered by circles,
which would be $\pi \sum_{i \in \mathcal{N}} r_i^2$; the linear objective instead tends
to favor more balanced distributions of circle sizes.
Constraints \eqref{eq:circle-bound-x} and \eqref{eq:circle-bound-y} ensure that
each circle remains entirely within the rectangle: the center coordinates must
maintain a distance of at least $r_i$ from all rectangle boundaries.
Constraints \eqref{eq:circle-nonoverlap} ensure that no two circles overlap by
requiring that the Euclidean distance\footnote{It is advantageous to work with squared distances to avoid square roots in the formulation.} between any pair of circle centers is at
least the sum of their radii. Constraints \eqref{eq:circle-radius-bound}
provide an upper bound on each radius: no circle can have a diameter exceeding
the width of the rectangle $\alpha$ (which is the shorter side by assumption).
Finally, constraint \eqref{eq:circle-width} bounds the width variable.

The aspect ratio determined by $\alpha$ is a decision variable that can be modified to maximize
the sum of radii for a given number of circles. We can trivially change this formulation to packing into a unit square by fixing $\alpha = 1$. This is a crucial property of mathematical optimization modeling: the user needs to change only the model and does not have to worry about whether or how this changes the algorithm: the solvers will take care of that. This contrasts with many heuristic approaches, including LLM-generated ones, in which often a new set of heuristics needs to be developed once the model formulation changes.

The formulation has only linear and nonconvex quadratic constraints. For larger values of $n$ the quadratic growth in the number of constraints \eqref{eq:circle-nonoverlap} will become dominating. 

\subsection{Dual bounds}

Unfortunately, model \eqref{eq:circlesquare} is very weak. Using the arithmetic-quadratic mean inequality and the fact that the sum of the areas of the circles is less than the area of the enclosing rectangle we have
\begin{equation}\label{eq:areaineq}
\sum_{i \in \mathcal{N}} r_i \leq \sqrt{n}\sqrt{\sum_{i \in \mathcal{N}} r_i^2}\leq \sqrt{n}\sqrt{\frac{\alpha(2-\alpha)}{\pi}}\leq\sqrt{\frac{n}{\pi}}.
\end{equation}
In contrast, the solvers typically start at a dual bound of $\frac{n}2$, corresponding to setting each $r_i$ to $\frac12$, and improve gradually from there. This is why we have decided to add the area inequality 
\begin{equation}\label{eq:area}
\sum_{i \in \mathcal{N}} r_i^2\leq\frac{1}{\pi}
\end{equation}
to model \eqref{eq:circlesquare} for all of our computational tests, not only for these problems, but for also the other problems in this paper. This constraint not only helps to tighten the dual bound, but it also helps in finding good feasible solutions. This is because it is a constraint involving all the radii of the circles, the only constraint dealing with more than two circles in the packing.

\subsection{Computational Results}

First, let us focus on the restricted problem, packing circles into a square. The improving solutions\footnote{We have found possibly even more improving solutions, but the original source \cite{friedman2012circle} for the best-known packings lists only three digits after the decimal. In almost all cases, we managed to match the best-known solutions from literature.} (floored to five decimal digits) are reported in Table~\ref{tab:packcircle} and visualized in Figure~\ref{fig:circlepacking}. It is instructional to compare this objective value to the bound we get from inequality \eqref{eq:areaineq}. For the $n=32$ case this yields an upper bound of $\sqrt{\frac{32}{\pi}}\approx3.191538\dots$, which is within $8.57\%$ of the solution. The relative tightness of this bound comes from two factors. First, the circles in the solutions have similar radii, so the arithmetic-quadratic inequality does not sacrifice much. Further, the total area of the circles is close to 1, so the are constraint is fairly tight.
\begin{table}[ht]
\caption{Improving solutions for the circle packing problem}\label{tab:packcircle}
\begin{tabular}{@{}ccrrr@{}}
\toprule
Variant &$n$ & Sum of radii & Previous best & Source\\
\midrule
square &32 & 2.93\textbf{957} & 2.93794 &\cite{georgiev2025mathematical,novikov2025alphaevolve} \\
\midrule
rectangle & 26 & 2.63\textbf{930} & 2.638 & \cite{friedmanPacking} \\
rectangle & 27 & 2.6\textbf{9015} & 2.687 & \cite{friedmanPacking} \\
\botrule
\end{tabular}
\end{table}
\begin{figure}[ht]
\begin{center}
\includegraphics[width=0.27\linewidth]{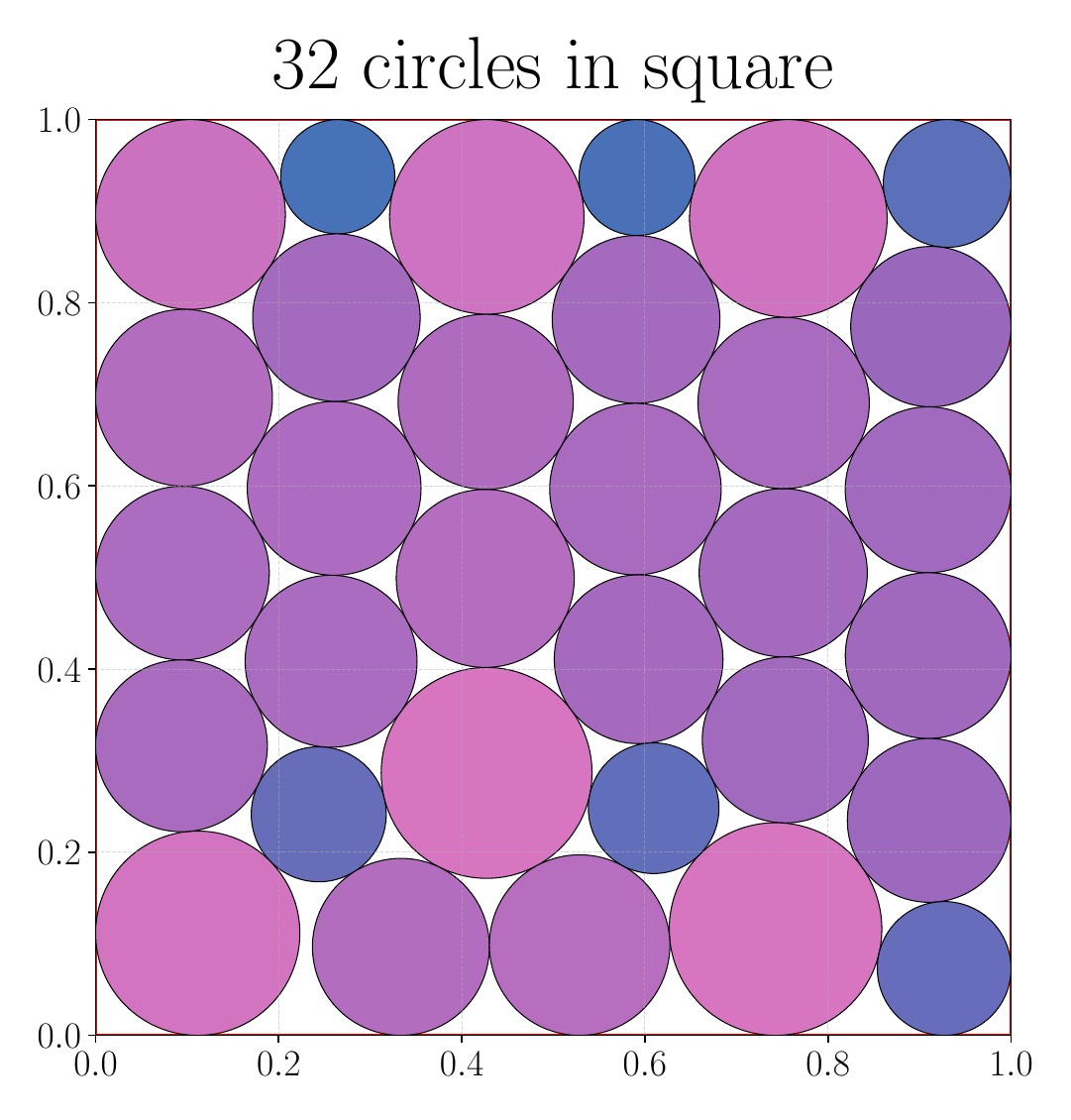}
\includegraphics[width=0.3\linewidth]{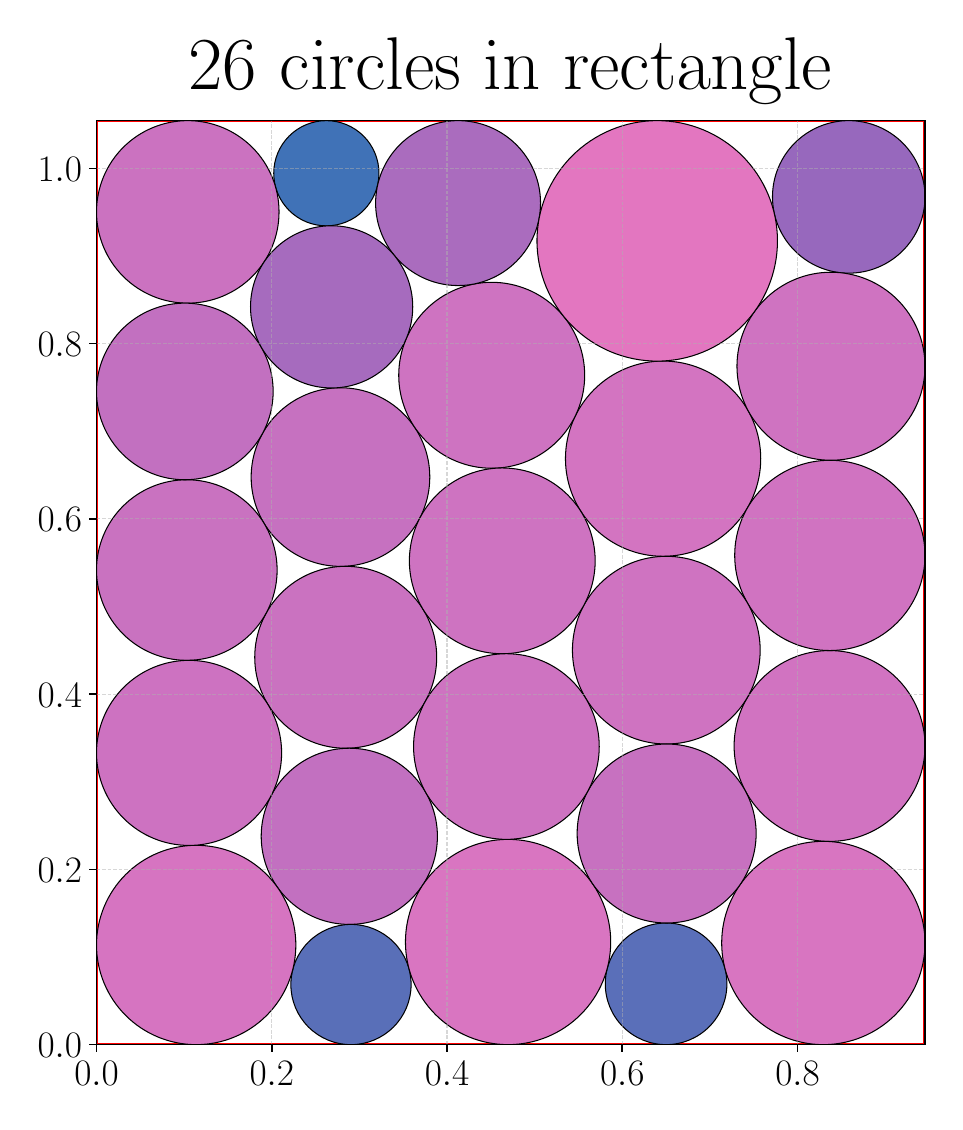}
\includegraphics[width=0.3\linewidth]{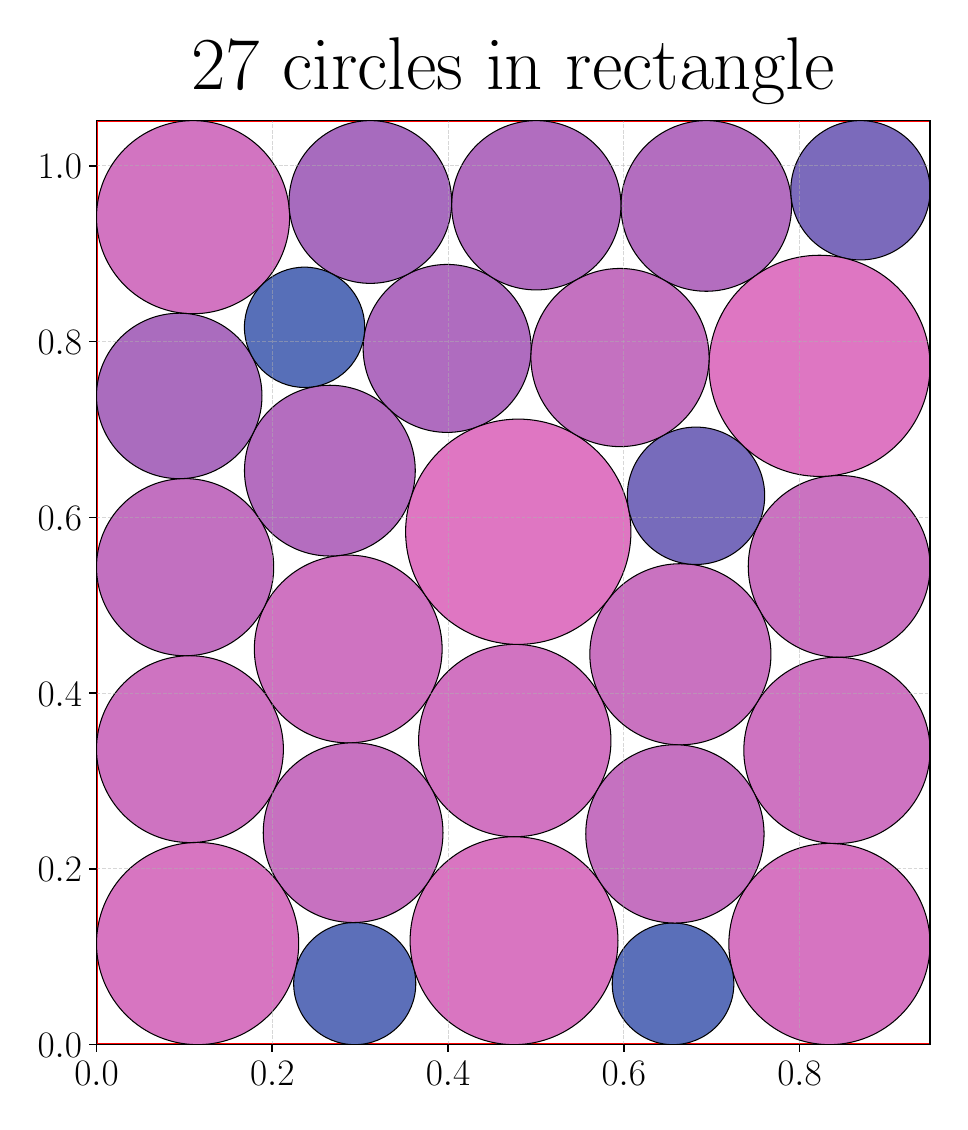}
\end{center}
\caption{Graphical representation of the new solutions for the circle packing problem. Left: square variant (n=32). Middle and right: rectangle variant (n=26, n=27). See Table~\ref{tab:packcircle} for details.}\label{fig:circlepacking} 
\end{figure}

Now we can turn to the relaxed version, where instead of a square we are trying to pack into a rectangle of perimeter 4. Obviously, all the previous solutions are still feasible for this relaxed problem, but we can do slightly better.
We found the following improving solutions, again
floored to five decimal digits (see Table~\ref{tab:packcircle}). The solutions are visualized in Figure~\ref{fig:circlepacking}. Not surprisingly, the solutions all tend to use a rectangle that is almost square.

We found it quite surprising that new solutions could be found for such a well-studied class of problems with off-the-shelf tools. In fact, it was this class of problems that we first tried from the original AlphaEvolve paper \cite{novikov2025alphaevolve}. The encouraging initial results lead to this much larger project.

\section{Packing Circles into an Ellipse}\label{sec:ellipsepacking}

In this section we present another interesting packing problem, where only very little progress has been made in 20 years. The problem is packing unit circles in an ellipse of minimum area. Known solutions are collected on Erich Friedman's packing website \cite{friedmanPacking}, the last reported solution being from 2007.

We will first present the optimization model, then see it in action. We will discuss some potential valid inequalities that can be used to strengthen the model.

\subsection{Characterizing the ellipse-circle containment}

The main difficulty of the optimization model is to formalize mathematically that a circle is contained in an ellipse. For this we will have to use the S-lemma \cite{yakubovich71}:
\begin{theorem}[S-lemma, Yakubovich, 1971]
Let $f, g:\Real^n \To \Real$ be (not necessarily homogeneous) quadratic functions and suppose that there is an $\bar x\in\Real^n$ such that
$g(\bar x)<0$. Then the following two statements are equivalent.
\begin{enumerate}
\item There is no $x\in\Real^n$ such that
\begin{subequations}\label{eq:slemma_primal}
\begin{eqnarray}
f(x)&<&0\\
g(x)&\leq&0.
\end{eqnarray}
\end{subequations}
\item There is a non-negative multiplier $\lambda\geq0$ such that
\begin{equation}
f(x)+\lambda g(x)\geq 0,\,\forall x\in\Real^n.
\end{equation}
\end{enumerate}
\end{theorem}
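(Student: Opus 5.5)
The implication (2)$\Rightarrow$(1) is immediate. Suppose $\lambda\ge0$ satisfies $f+\lambda g\ge0$ on $\Real^n$, and take any $x$ with $g(x)\le0$. Then $f(x)\ge -\lambda g(x)\ge0$, so the system \eqref{eq:slemma_primal} has no solution.

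For (1)$\Rightarrow$(2) I would follow the classical convexity route, as in P\'olik and Terlaky~\cite{polik07slemma}.

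\emph{Homogenization.} Write $f(x)=x^TAx+2a^Tx+\alpha$ and $g(x)=x^TBx+2b^Tx+\beta$. Introduce the symmetric $(n+1)\times(n+1)$ matrices
\[
F=\begin{pmatrix}\alpha & a^T\\ a & A\end{pmatrix},\qquad G=\begin{pmatrix}\beta & b^T\\ b & B\end{pmatrix},
\]
so that $f(x)=(1,x^T)F(1,x^T)^T$ and similarly for $g$. Statement 2 is then exactly $F+\lambda G\succeq0$.

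\emph{Homogeneous infeasibility.} I would show that (1) implies there is no $z\in\Real^{n+1}$ with $z^TFz<0$ and $z^TGz\le0$. For $z=(t,y)$ with $t\ne0$, rescale to $t=1$, which gives a point violating (1). The case $t=0$ is the main technical obstacle. Here I would use the Slater point: perturb $z$ to $z+\epsilon(1,\bar x)$, or argue by continuity along $y+s(\cdot)$, to obtain a nearby $z'$ with nonzero first coordinate. One must check that the strict inequality $z^TFz<0$ survives the perturbation and that $g$ stays nonpositive. This second part uses $g(\bar x)<0$: for small $\epsilon$ the cross terms can be controlled after choosing the sign of $\epsilon$ appropriately. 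If this fails I would instead take the point $x=\bar x+sy$ with $s\to\infty$: then $f(x)\sim s^2 y^TAy<0$, and $g(x)\sim s^2y^TBy\le0$. When $y^TBy=0$ the leading term vanishes and the linear term must be handled, which is exactly where the Slater point is needed.

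\emph{Convexity and separation.} By Dines' theorem, the joint numerical range $\{(z^TFz,\,z^TGz): z\in\Real^{n+1}\}$ is a convex cone in $\Real^2$. (This can be proved directly by the two-dimensional argument: restrict to $\mathrm{span}\{z_1,z_2\}$ and use the intermediate value theorem on a path between them.) The homogeneous infeasibility says this cone is disjoint from the convex cone $\{(u,v):u<0,\ v\le0\}$. A separating line gives $(\mu,\lambda')\ge0$, not both zero, with $\mu z^TFz+\lambda' z^TGz\ge0$ for all $z$.

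\emph{Normalizing the multiplier.} If $\mu=0$, then $\lambda'>0$ and $G\succeq0$, which contradicts $g(\bar x)<0$. Hence $\mu>0$, and dividing by $\mu$ gives $\lambda=\lambda'/\mu\ge0$ with $F+\lambda G\succeq0$, which is statement 2.
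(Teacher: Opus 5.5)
Your proof is correct and is the classical argument. The paper does not prove the S-lemma itself; it quotes it from Yakubovich and the P\'olik--Terlaky survey. Your route (homogenize, use the Slater point to exclude solutions of $z^TFz<0$, $z^TGz\le 0$ with vanishing first coordinate, apply Dines' convexity theorem and separate, then use $g(\bar x)<0$ to force $\mu>0$) is the standard proof given there.

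You can drop the hedging in the homogenization step, because your first perturbation already works. Take $z=(0,y)$ and $w=(1,\bar x)$, and choose the sign of $\epsilon\neq 0$ so that $\epsilon\,y^T(B\bar x+b)\le 0$. Then $(z+\epsilon w)^TG(z+\epsilon w)=z^TGz+2\epsilon\,y^T(B\bar x+b)+\epsilon^2 g(\bar x)\le\epsilon^2 g(\bar x)<0$. Also $(z+\epsilon w)^TF(z+\epsilon w)<0$ for small $|\epsilon|$ by continuity, so no fallback is needed.

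The parenthetical intermediate-value sketch of Dines' theorem is too loose to serve as a proof. Simply cite Dines. Alternatively, use the fact that two binary quadratic forms map the unit circle onto a possibly degenerate ellipse, and the set of nonnegative multiples of such a curve is convex.
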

One could view this result as two quadratic functions seemingly behaving as if they were convex, as far as Lagrange duality is concerned. See \cite{polik07slemma} for a recent survey on the S-lemma and its connections to other results.

Now consider an ellipse centered at the origin, with semi-axes $a$ and $b$, and a circle of radius $r$ centered at $(x_0, y_0)$. The circle is contained in the ellipse if and only if
\begin{subequations}
\begin{align}
\notag \nexists x, y&\\
\left(x-x_0\right)^2 + \left(y-y_0\right)^2 &\leq r^2\\
b^2x^2+a^2y^2&>a^2b^2,
\end{align}
\end{subequations}
where the second inequality is an alternative form of the ellipse equation, avoiding the divisions. This can be written equivalently using the S-lemma:
\begin{align}
\label{eq:Snonneg}\exists t \geq 0: \forall x, y\in \mathbb{R}: 
 a^2b^2 - b^2x^2  - a^2y^2  + t\left(\left(x-x_0\right)^2 + \left(y-y_0\right)^2 - r^2\right) \geq0.
\end{align}
This is a non-homogeneous quadratic expression without cross-terms. The following lemma summarizes when this expression is nonnegative:
\begin{lemma}
The quadratic expression $Ax^2 + 2Bx + Cy^2 + 2Dy + E$ is nonnegative for all $x,y$ if and only if
\begin{subequations}
\begin{align}
A&\geq 0 \\
C&\geq 0\\
E&\geq 0 \\
AE-B^2&\geq 0\\
CE-D^2&\geq 0\\
AEC-AD^2-CB^2&\geq0.
\end{align}
\end{subequations}
\end{lemma}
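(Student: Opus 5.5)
The plan is to homogenize and reduce to positive semidefiniteness of a $3\times 3$ symmetric matrix. Write
\[
q(x,y)=Ax^2+2Bx+Cy^2+2Dy+E=\begin{pmatrix}x\\y\\1\end{pmatrix}^{\!\top} M \begin{pmatrix}x\\y\\1\end{pmatrix},\qquad
M=\begin{pmatrix}A&0&B\\0&C&D\\B&D&E\end{pmatrix}.
\]
First we show that $q\ge 0$ on $\Real^2$ if and only if $M\succeq 0$. The ``if'' direction is immediate. For ``only if'', we take any $(u,v,w)$. If $w\neq 0$, scaling gives $(u,v,w)^\top M(u,v,w)=w^2 q(u/w,v/w)\ge 0$. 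If $w=0$, we use continuity: $(u,v,\epsilon)^\top M (u,v,\epsilon)\ge 0$ for all $\epsilon\neq 0$, and letting $\epsilon\to 0$ gives the claim.

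Second, we use the standard characterization that a symmetric matrix is positive semidefinite if and only if \emph{all} its principal minors, not just the leading ones, are nonnegative. The principal minors of $M$ are as follows. The $1\times1$ minors are $A$, $C$, $E$. The $2\times 2$ minors are $AC$ (rows $\{1,2\}$), $AE-B^2$ (rows $\{1,3\}$) and $CE-D^2$ (rows $\{2,3\}$). Expanding the determinant along the first row gives
\[
\det M = A(CE-D^2)-B(0\cdot D - CB)=ACE-AD^2-CB^2 .
\]
The only minor absent from the lemma's list is $AC$, and it is redundant because $A\ge0$ and $C\ge0$ already imply $AC\ge 0$. Hence the six listed inequalities are exactly equivalent to all principal minors of $M$ being nonnegative, which is equivalent to $M\succeq0$ and hence to $q\ge 0$.

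The main obstacle is making sure the minor criterion is applied correctly. Leading minors alone, i.e. Sylvester's criterion, do not suffice for semidefiniteness. For example, $A=0$ forces $B=0$ through $AE-B^2\ge 0$, and such degenerate cases are exactly why all principal minors are needed. Since the full principal-minor criterion is standard, we invoke it directly; alternatively it follows from expanding the characteristic polynomial, whose coefficients are sums of principal minors, so that no eigenvalue can be negative when all coefficients alternate in sign appropriately. One could also give a direct elementary proof by minimizing in $x$ and $y$ separately, since there are no cross terms. That route splits into cases $A>0$ versus $A=0$, where $A=0$ forces $B=0$, and similarly for $C$. In the case $A,C>0$, the minimum value $E-B^2/A-D^2/C\ge0$ is equivalent to the determinant condition. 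This case analysis serves as a sanity check on the matrix argument.
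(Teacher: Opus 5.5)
Your proposal is correct and follows the paper's argument: you homogenize, reduce to positive semidefiniteness of the $3\times 3$ matrix, and apply the all-principal-minors criterion. You are also slightly more careful than the paper in handling the $w=0$ case by continuity and in noting that the seventh minor $AC$ is implied by $A,C\ge 0$. One small slip is in your cofactor expansion, where the third term should read $+B(0\cdot D - CB)$ rather than $-B(\cdots)$; your final value $ACE-AD^2-CB^2$ is nonetheless correct.
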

\begin{proof}
First, a standard homogenization argument shows that $Ax^2 + 2Bx + Cy^2 + 2Dy + E$ is nonnegative for all $x$ and $y$ if and only if the homogeneous expression $Ax^2 + 2Bxu + Cy^2 + 2Dyu + Eu^2$ is nonnegative for $x$, $y$ and $u$. Since this is a homogeneous quadratic form, its nonnegativity is equivalent to the following matrix being positive semidefinite:
\begin{equation}
\begin{bmatrix}
A &B &0\\
B &E &D\\
0 &D &C
\end{bmatrix}.
\end{equation}
This, on the other hand, is easy to characterize by setting all principal minors to be nonnegative, yielding exactly the six inequalities in the lemma. Notice that the formulas are greatly simplified because of the 0 appearing in the matrix, due to the lack of an $xy$ term.
\end{proof}
We can apply this result to the expression in \eqref{eq:Snonneg} to get a condition in terms of the problem data.
\begin{subequations}
\begin{align}
A&=t-b^2\\
B&=-tx_0\\
C&=t-a^2\\
D&=-ty_0\\
E&=a^2b^2+t\left(x_0^2+y_0^2-r^2\right)
\end{align}
\end{subequations}
Now we are ready to write the complete optimization problem. We place the ellipse centered at the origin, with semi-axes $a$ and $b$, which are the main decision variables. The $n$ circles are located at $(x_i, y_i), i=1,\dots,n$. For each circle $i$ we will introduce a multiplier $t_i$, which will certify that circle $i$ is contained in the ellipse. The model is
\begin{subequations}
\begin{alignat}{2}
    \min\  a \cdot b \cdot\pi &
    \label{eq:ellipse-obj} && \\[0.3em]
    \text{s.t.}\quad
     (x_i-x_j)^2 + (y_i-y_j)^2 &\geq 4,
    &&\textrm{\quad for all } i < j
    \label{eq:circdisj} \\
     t_i - a^2 &\geq 0,
    &&\textrm{\quad for all } i
    \label{eq:circellipse1} \\
     t_i - b^2 &\geq 0,
    &&\textrm{\quad for all } i
    \label{eq:circellipse0} \\
     a^2 b^2 + t_i (x_i^2 + y_i^2 - 1) &\geq 0,
    &&\textrm{\quad for all } i \\
     (t_i - a^2)(a^2 b^2 + t_i (x_i^2 + y_i^2 - 1)) - t_i^2 y_i^2 &\geq 0,
    &&\textrm{\quad for all } i \\
     (t_i - b^2)(a^2 b^2 + t_i (x_i^2 + y_i^2 - 1)) - t_i^2 x_i^2 &\geq 0,
    &&\textrm{\quad for all } i \\
     (t_i - a^2)(t_i - b^2)(a^2 b^2 + t_i (x_i^2 + y_i^2 - 1)) \nonumber & \\
     {} - t_i^2(x_i^2(t_i - a^2) + y_i^2(t_i - b^2)) &\geq 0,
    &&\textrm{\quad for all } i
    \label{eq:circellipse2} \\
     t_i &\geq 1,
    &&\textrm{\quad for all } i \\
     a, b &\geq 1
    \label{eq:ellipcont} && \\
     a &\geq b
    \label{eq:ellipsym}
\end{alignat}
\end{subequations}
where all indices $i$ and $j$ are in $1,\dots,n$. 

Constraints \eqref{eq:circdisj} ensure that the circles do not overlap. Constraints \eqref{eq:circellipse1}-\eqref{eq:circellipse2} guarantee that the circles are all contained in the ellipse. In particular, if constraint \eqref{eq:circellipse2} is satisfied with strict inequality, then that circle is proved to be strictly inside the ellipse, not touching its boundary. Constraint \eqref{eq:ellipcont} is a slight strengthening of the nonnegativity of the semiaxes, to guarantee that at least a single circle can be contained in the ellipse. 
Constraint \eqref{eq:ellipsym} ensures that the ellipse has its longer semi-axis along the horizontal axis, which we can assume without loss of generality.

Interestingly, the variables $x_i$ and $y_i$ do not have a finite bound: the optimal ellipse can be elongated. This is also why $a$ and $b$ do not have an upper bound a priori. However, once a feasible solution with objective value $z$ has been found, its objective value can be used to derive an upper bound on $a$, since we can then restrict the solution space to those ellipses that have an area smaller than $z$. In articular, this means that $1\leq b\leq  a\leq \frac{z}{\pi}$. These then imply finite bounds on $x_i$ and $y_i$.

Overall, the model has polynomial degree 8 (coming from the $a^4b^4$ term in constraint \eqref{eq:circellipse2}), which is a rare phenomenon in practice.

\subsection{Strengthening}

First of all we can drop constraint \eqref{eq:circellipse0}, because it is implied by $a\geq b$. Further, we can add a constraint that each circle center is inside the ellipse:
\begin{equation}
b^2x_i^2+a^2y_i^2\leq a^2b^2.
\end{equation}
We can strengthen this further by adding some simple bounds on the centers:
\begin{subequations}
\begin{align}
      x_i &\leq  a - 1\\
      x_i &\geq -a + 1\\
      y_i &\leq   b - 1\\
      y_i &\geq -b + 1
\end{align}
\end{subequations}
These are simple enough not to slow down the optimization. In addition, we could add similar constraints for any point along the circles, such as $\left(x_i\pm1, y_i\right)$ and $\left(x_i, y_i\pm1\right)$. This would be four extra quadratic constraints per circle, which could be too much. Therefore, we did not add these constraints.

Finally, we can add a constraint about the area of the ellipse: it needs to be at least as large as the sum of the areas of the circles:
\begin{equation}
ab \geq n.
\end{equation}
This is an objective cut, which in general does not help the optimization, but here it is beneficial in making sure that the ellipse is large enough. Using the symmetry breaking constraint \eqref{eq:ellipsym} we can also add the constraint 
\begin{equation}
a \geq \sqrt{n}.
\end{equation}

\subsection{Symmetry breaking}

There is a lot of symmetry in the formulation. We have already dealt with an obvious one, that of the containing ellipse, by making it longer along the horizontal axis. There are two other sources of symmetry.

One is the mirror symmetry of the ellipse: this allows us to convert any packing into an equivalent one simply by mirroring the location of the circles. To counter this we need to add a constraint that prefers one solution to a symmetric copy. One example is to constrain the centroid of the circles to fall in a certain quarter of the ellipse:
\begin{subequations}
\begin{align}
\label{eq:symbreakx}\sum_{i=1}^nx_i &\geq 0\\
\sum_{i=1}^ny_i &\geq 0.
\end{align}
\end{subequations}
Unless the centroid of the solution is the origin (or lies on one of the coordinate axes), these two constraints will exclude equivalent symmetric solutions. Unfortunately, optimal configurations tend to have some kind of symmetry, so this technique is not perfect.

Another issue is the symmetry of numbering the circles. With $n$ circles we could have $n!$ different solutions just by renumbering them. Therefore, it is common practice\footnote{This would not be necessary for mixed-integer \emph{linear} problems, where exploiting symmetry has long been part of all commercial and even open-source solvers, see \cite{pfetsch2019symmetry} for a comprehensive survey. On the other hand, since this kind of symmetry is much less common for nonlinear problems, and detecting it is more complicated, it has not been added to nonlinear solvers.} to add a symmetry breaking constraint. The most natural one is a canonical ordering of the circles, such as ordering their centers in increasing order of the $x$ coordinate:
\begin{equation}
\label{eq:symsortx} x_i\leq x_{i+1},\quad 1\leq i \leq n-1.
\end{equation}
This is not perfect, because two circles can still end up with an identical $x$ coordinate (see the solution for $n=15$ in Figure~\ref{fig:ellipse_test}), but it is simple and drastically reduces the symmetry in the problem. An added benefit of this approach is that it can be used to reduce the symmetry of the configuration itself. In particular, instead of the inequality \eqref{eq:symbreakx} we can write
\begin{equation}
x_{\left\lceil\frac{n}{2}\right\rceil} \geq 0,
\end{equation}
which, together with the sorting constraint \eqref{eq:symsortx}, ensures that at least half of the points are in the right half of the ellipse. This inequality has much fewer nonzeros than constraint \eqref{eq:symbreakx} and achieves a similar result. Note, however, that this cannot be done at the same time for the $y$ coordinate, as we cannot assume that the points are sorted according to both coordinates.

An alternative solution is to order the points along a generic line:
\begin{equation}
\alpha x_i + \beta y_i\leq \alpha x_{i+1} + \beta y_{i+1}, 1\leq i \leq n-1,
\end{equation}
where $\alpha$ and $\beta$ should be selected in a way to make it very unlikely that any of these inequalities is binding. A couple of random irrational numbers generally work, but could harm the numerics of the problem. For this application the choice of $\alpha=2$ and $\beta=3$ would work. This constraint introduces more nonzeros into the problem, but completely eliminates the symmetry coming from reordering the circles.

\subsection{Computational results}

All solutions were postprocessed and polished using the techniques in Section~\ref{sec:polishing}.

\subsubsection{The new solutions}
We were able to reproduce all of the known solutions\footnote{We do not know how the existing solutions were found and how the circle-ellipse containment was enforced or verified for them.} on Erich Friedman's packing website \cite{friedmanPacking}. In addition, we found two new improving solutions. These are different configurations, not just slightly improved versions of the previous solutions. 


\begin{figure}[h]
\begin{center}
\includegraphics[width=0.45\linewidth]{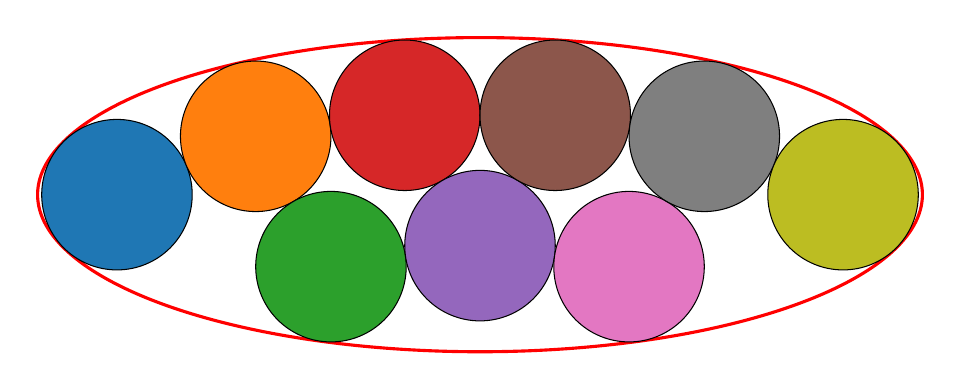}
\includegraphics[width=0.45\linewidth]{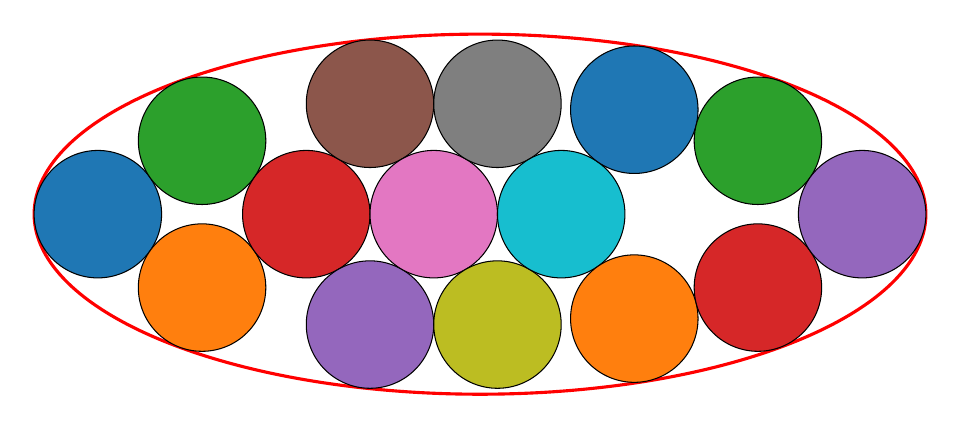}
\end{center}
\caption{New circle packing arrangements for $n=9$ (left, area $\pi\cdot12.2708...$, improving the previous $\pi\cdot 12.403...$) and $n=15$ (right, area $\pi\cdot19.76329049...$, improving the previous $\pi \cdot19.824...$).}
\label{fig:ellipse_test}
\end{figure}


The configuration for $n=9$ in Figure \ref{fig:ellipse_test} is symmetric about the vertical axis. The semiaxes of the ellipse are $5.8787236$ and $2.087324145$.


The configuration for $n=15$ in Figure \ref{fig:ellipse_test} is symmetric about the horizontal axis. The semiaxes of the ellipse are $6.995912684$ and $2.824976723$. Interestingly, this solution is not rigid: the two points with $x$ coordinates $0.274...$ do not touch the ellipse, so they are free to move. There are thus infinitely many equivalent solutions. The solution we have presented is the one where these circles are touching three other circles.

\section{Packing Polygons} 
\label{sec:polygonpacking}

The problem of packing $n$ regular $m$-gons into a regular $\ell$-gon of minimum circumradius is significantly more
involved than circle packing (Section~\ref{sec:circlepacking}), as each inner element can rotate freely and non-overlap
conditions must account for the specific polygonal geometry of each element.
Optimization models for general polygon packing have been developed using
quasi-phi-functions~\cite{kallrath2009cutting,stoyan2016quasi,romanova2018packing}; we adopt a related approach based
on the Farkas lemma~\cite{bertsimas1997introduction}, which yields a compact formulation applicable to any regular
polygon pair without shape-specific derivations.

\subsection{Optimization model} 
\label{subsec:polygonmodel}

The problem is to pack $n$ regular $m$-gons with unit circumradius into a regular $\ell$-gon,
minimizing the circumradius $R$ of the outer polygon.
Let $\mathcal{N} = \{1, 2, \ldots, n\}$ denote the set of inner polygons,
$\mathcal{M} = \{0, 1, \ldots, m-1\}$ index both the $m$ vertices and $m$ edges of each inner polygon, and
$\mathcal{L} = \{0, 1, \ldots, \ell-1\}$ denote the $\ell$ edges of the outer polygon. We use
the following geometric constants for regular polygons with unit circumradius:
\begin{description}
\item[$\rho_m = \cos(\pi/m)$:] inradius of inner $m$-gons
\item[$\rho_\ell = \cos(\pi/\ell)$:] inradius of outer $\ell$-gon
\item[$\phi_m = 2\pi/m$:] angular distance between adjacent vertices of $m$-gons
\item[$\phi_\ell = 2\pi/\ell$:] angular distance between adjacent vertices of $\ell$-gon
\item[$\delta_{m,j} = (j + ((m-1) \bmod 2)/2)\phi_m$:] vertex angle for $j \in \mathcal{M}$ of $m$-gons
\item[$R_{\min} = \sqrt{\frac{n \cdot m \cdot \sin(\phi_m)}{\ell \cdot \sin(\phi_\ell)}}$:] lower bound on circumradius from area requirement
\end{description}
We define the following decision variables:
\begin{description}
\item[$R \ge 0$:] circumradius of the scaled $\ell$-gon (to be minimized)
\item[$(x_i, y_i) \in \mathbb{R}^2$:] coordinates of the center of inner $m$-gon $i \in \mathcal{N}$
\item[$\theta_i \in \lbrack0, \phi_m\rbrack$:] rotation angle of inner $m$-gon $i \in \mathcal{N}$
\item[$a_{i,j}, b_{i,j} \in \mathbb{R}$:] oriented inward normal components for edge $j \in \mathcal{M}$ of $m$-gon $i \in \mathcal{N}$
\item[$s_{i,j} \in \mathbb{R}$:] offset for edge $j \in \mathcal{M}$ of $m$-gon $i \in \mathcal{N}$
\item[$\lambda_{i,j,k} \ge 0$:] Farkas multiplier of pair $i, j \in \mathcal{N}$ with $i < j$ and common edge index $k \in \{1, \ldots, 2m\}$
\end{description}
The separation conditions are based on the Farkas lemma~\cite{bertsimas1997introduction}: a system of strict linear inequalities $Ax > b$ is infeasible if and only if there exist nonnegative multipliers $y \geq 0$, $y \neq 0$, with $y^T A = 0$ and $y^T b \geq 0$.

\begin{lemma}
\label{lem:polygon-separation}
Two convex $m$-gons with half-space representations
$\{a_{i,k}x + b_{i,k}y \geq s_{i,k}\}_{k \in \mathcal{M}}$ and $\{a_{j,k}x + b_{j,k}y \geq s_{j,k}\}_{k \in \mathcal{M}}$
have disjoint interiors if and only if there exist nonnegative multipliers $\lambda_1, \ldots, \lambda_{2m}$ summing to
one with
\begin{subequations}
\begin{align}
  \sum_{k\in\mathcal{M}} \lambda_{k+1} a_{i,k} + \sum_{k\in\mathcal{M}} \lambda_{k+m+1} a_{j,k} &= 0\\
  \sum_{k\in\mathcal{M}} \lambda_{k+1} b_{i,k} + \sum_{k\in\mathcal{M}} \lambda_{k+m+1} b_{j,k} &= 0\\
  \sum_{k\in\mathcal{M}} \lambda_{k+1} s_{i,k} + \sum_{k\in\mathcal{M}} \lambda_{k+m+1} s_{j,k} &\geq 0
\end{align}
\end{subequations}
i.e., a canceling normal combination with nonnegative gap.
\end{lemma}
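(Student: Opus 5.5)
The interiors of the two polygons are
\[
\{(x,y): a_{i,k}x+b_{i,k}y>s_{i,k},\ k\in\mathcal{M}\},\qquad \{(x,y): a_{j,k}x+b_{j,k}y>s_{j,k},\ k\in\mathcal{M}\}.
\]
This is exact provided each polygon is full-dimensional, which holds for regular $m$-gons with positive circumradius; I would state this nondegeneracy explicitly. Disjointness of the interiors is then equivalent to infeasibility of the stacked strict system $Ax>b$. Here $A$ is the $2m\times 2$ matrix whose rows are $(a_{i,k},b_{i,k})$ followed by $(a_{j,k},b_{j,k})$, and $b$ stacks the offsets $s_{i,k}$ and $s_{j,k}$ in the same order, so that row $k+1$ belongs to polygon $i$ and row $k+m+1$ to polygon $j$. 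The plan is to apply the strict-inequality Farkas (Gordan/Motzkin-type) alternative quoted just before the lemma to this system, and then normalize the multiplier.

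For the ``if'' direction, take $\lambda\ge0$ with $\sum\lambda_k=1$ and $\lambda^TA=0$, $\lambda^Tb\ge0$. Suppose a point $(x,y)$ lay in both interiors. Multiply each strict inequality by $\lambda_k$ and sum. Since $\lambda\ne0$, at least one multiplier is positive, so the combination is strict: $0=\lambda^TA(x,y)^T>\lambda^Tb\ge0$, a contradiction. This direction is elementary and needs no external result.

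For the ``only if'' direction, assume the interiors are disjoint, so $Ax>b$ is infeasible. The quoted Farkas variant supplies $y\ge0$, $y\ne0$, with $y^TA=0$ and $y^Tb\ge0$. Since $y\ne0$ and $y\ge0$, we have $\sum y_k>0$, and $\lambda=y/\sum y_k$ satisfies all the stated conditions; these are invariant under positive scaling. Reading off the components of $\lambda^TA=0$ gives the two equalities, and $\lambda^Tb\ge0$ gives the gap inequality.

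The main obstacle is justifying the strict-inequality alternative itself, if one does not simply cite it. I would prove it by homogenization: $Ax>b$ is infeasible iff the homogeneous system $Ax-bt>0$, $t>0$ is infeasible. Gordan's theorem applied to the matrix with rows $[A,-b]$ and $[0,1]$ then gives a nonzero $(y,\mu)\ge0$ with $y^TA=0$ and $-y^Tb+\mu=0$. Hence $y^Tb=\mu\ge0$. If $y=0$, then $\mu=y^Tb=0$, contradicting that $(y,\mu)$ is nonzero; so $y\ne0$, as required.
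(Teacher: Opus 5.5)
Your proof is correct and follows essentially the same route as the paper: write the intersection of the interiors as the stacked strict system, apply the strict-inequality Farkas alternative, and normalize the nonzero nonnegative certificate by its sum. The only difference is that you check the easy direction directly and derive the Farkas variant from Gordan's theorem via homogenization, where the paper simply cites the lemma.
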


\begin{proof}
The interiors of the two polygons have nonempty intersection if and only if the strict linear system
\begin{subequations}
\begin{align}
  a_{i,k} x + b_{i,k} y &> s_{i,k}, \quad k \in \mathcal{M}, \\
  a_{j,k} x + b_{j,k} y &> s_{j,k}, \quad k \in \mathcal{M}
\end{align}
\end{subequations}
in $(x, y) \in \mathbb{R}^2$ is feasible.
By the Farkas lemma, this system is infeasible if and only if there exist $\mu_1, \ldots, \mu_{2m} \ge 0$, not all zero, such that
\begin{subequations}
\begin{align}
  \sum_{k \in \mathcal{M}} \mu_k a_{i,k} + \sum_{k \in \mathcal{M}} \mu_{m+k} a_{j,k} &= 0, \\
  \sum_{k \in \mathcal{M}} \mu_k b_{i,k} + \sum_{k \in \mathcal{M}} \mu_{m+k} b_{j,k} &= 0, \\
  \sum_{k \in \mathcal{M}} \mu_k s_{i,k} + \sum_{k \in \mathcal{M}} \mu_{m+k} s_{j,k} &\ge 0.
\end{align}
\end{subequations}
Dividing by $\sum_{p=1}^{2m} \mu_p > 0$ yields multipliers summing to one satisfying all three conditions of the lemma.
\end{proof}

For each pair $(i, j)$ with $i < j$, the $2m$ Farkas multipliers $\lambda_{i,j,k}$, one per edge of each of the two
$m$-gons, encode this certificate.
Geometrically, the partial normal sums
$\sum_k \lambda_{k+1}(a_{i,k}, b_{i,k})$ and $\sum_k \lambda_{k+m+1}(a_{j,k}, b_{j,k})$
are equal and opposite, defining a normal axis with separating offset
$\sum_k \lambda_{k+1} s_{i,k} + \sum_k \lambda_{k+m+1} s_{j,k}$.
Conversely, given a separating axis with unit normal $(u,v)$ and offset $c$, the multipliers for polygon $i$ are
obtained by solving the LP
\begin{subequations} \label{equ:separationlp}
\begin{align}
  \label{equ:separationlp1}
  \max_{\mu_k \ge 0} \quad & \textstyle\sum_{k\in\mathcal{M}} \mu_k\, s_{i,k} \\
  \label{equ:separationlp2}
  \text{s.t.} \quad & \textstyle\sum_{k\in\mathcal{M}} \mu_k\, (a_{i,k}, b_{i,k}) = -(u,v),
\end{align}
\end{subequations}
whose objective does not exceed $c$ by optimality; solving the same LP for polygon $j$ with the negated direction gives
$\nu_k$, and combining both as $\lambda_{k+1} = \mu_k / S$ and $\lambda_{k+m+1} = \nu_k / S$ with
$S := \sum_k \mu_k + \sum_k \nu_k$ yields feasible Farkas multipliers.
The approach applies to any convex polygon without shape-specific derivations.

\begin{remark}
Since the dual feasible bases of the LP \eqref{equ:separationlp} are defined by the vertices of the two-dimensional polygon, at most two positive adjacent Farkas multipliers per element in each pair are required to satisfy the separation conditions if possible, independent of the number of polygon vertices.
\end{remark}

The optimization problem can be formulated as:
\allowdisplaybreaks
\begin{subequations}
\begin{alignat}{2}
    \min\  R& \label{eq:polygon-obj} && \\[0.3em]
\quad
     R\rho_\ell + \sin(k\phi_\ell)\bigl(x_i + \sin(\theta_i + \delta_{m,j})\bigr) \nonumber &\\
     + \cos(k\phi_\ell)\bigl(y_i + \cos(\theta_i + \delta_{m,j})\bigr)
      &\ge 0,
    &&\forall i \in \mathcal{N},\ j \in \mathcal{M}, \nonumber \\
    &&&k \in \mathcal{L}
    \label{eq:polygon-containment} \\[0.3em]
     a_{i,j} = \sin(\theta_i + j\phi_m),&
    &&\forall i \in \mathcal{N},\ j \in \mathcal{M}
    \label{eq:polygon-normal-x} \\
     b_{i,j} = \cos(\theta_i + j\phi_m),&
    &&\forall i \in \mathcal{N},\ j \in \mathcal{M}
    \label{eq:polygon-normal-y} \\
     s_{i,j} = a_{i,j}x_i + b_{i,j}y_i - \rho_m,&
    &&\forall i \in \mathcal{N},\ j \in \mathcal{M}
    \label{eq:polygon-offset} \\[0.3em]
     \sum_{k=1}^{2m} \lambda_{i,j,k} &= 1,
    &&\forall i,j \in \mathcal{N},\ i<j
    \label{eq:polygon-farkas-sum} \\
     \sum_{k=0}^{m-1} \lambda_{i,j,k+1} a_{i,k}
      + \sum_{k=0}^{m-1} \lambda_{i,j,k+m+1} a_{j,k} &= 0,
    &&\forall i,j \in \mathcal{N},\ i<j
    \label{eq:polygon-farkas-x} \\
     \sum_{k=0}^{m-1} \lambda_{i,j,k+1} b_{i,k}
      + \sum_{k=0}^{m-1} \lambda_{i,j,k+m+1} b_{j,k} &= 0,
    &&\forall i,j \in \mathcal{N},\ i<j
    \label{eq:polygon-farkas-y} \\
     \sum_{k=0}^{m-1} \lambda_{i,j,k+1} s_{i,k}
      + \sum_{k=0}^{m-1} \lambda_{i,j,k+m+1} s_{j,k} &\ge 0,
    &&\forall i,j \in \mathcal{N},\ i<j
    \label{eq:polygon-farkas-sep} \\[0.3em]
     0 \le \theta_i &\le \phi_m,
    &&\forall i \in \mathcal{N}
    \label{eq:polygon-rotation} \\
     \lambda_{i,j,k} &\ge 0,
    &&\forall i,j \in \mathcal{N},\ i<j, \nonumber \\
    &&&k \in \{1,\ldots,2m\} \\[0.3em]
     (x_i - x_j)^2 + (y_i - y_j)^2 &\ge (2\rho_m)^2,
    &&\forall i,j \in \mathcal{N},\ i<j
    \label{eq:polygon-dist} \\
     R &\ge R_{\min}
    \label{eq:polygon-bounds}
    \end{alignat}
\end{subequations}
Constraints \eqref{eq:polygon-containment} ensure that all vertices of each inner
$m$-gon lie within the outer $\ell$-gon. The vertex angles $\delta_{m,j}$ account for
the positions of the inner polygon vertices relative to their inward edge normals.
Each vertex must satisfy all $\ell$ half-space constraints defining the outer $\ell$-gon.

Constraints \eqref{eq:polygon-normal-x}--\eqref{eq:polygon-offset} define the
oriented half-space representation of the inner polygons. For this, each edge $j \in \mathcal{M}$ of
inner polygon $i \in \mathcal{N}$ is evaluated by the oriented inward normal $(a_{i,j}, b_{i,j})$ and
offset $s_{i,j}$, representing the defining half-space constraint $a_{i,j} x + b_{i,j} y \geq s_{i,j}$.

Constraints \eqref{eq:polygon-farkas-sum}--\eqref{eq:polygon-farkas-sep} represent the Farkas-based separation
conditions as described above.

Constraint \eqref{eq:polygon-dist} requires that the centers of any two inner $m$-gons are at least $2\rho_m$ apart, which is redundant given the Farkas separation conditions.

Constraint \eqref{eq:polygon-rotation} restricts the rotation angle to $[0, \phi_m]$
due to the $m$-fold rotational symmetry of regular $m$-gons, i.e., rotations beyond
$\phi_m$ are congruent to rotations within this range.
Finally, the lower bound $R_{\min}$ in \eqref{eq:polygon-bounds} is derived from the containment requirement that the
area of the scaled outer $\ell$-gon must be at least the total area of the $n$ unit inner $m$-gons.

\subsection{Computational Results} 
\label{subsec:polygonresults}

The resulting circumradii for all computed polygon packing instances are listed in the online supplement\footnote{\url{https://github.com/DominikKamp/Packing}}. We report the improving solutions in Table~\ref{tab:polygon} and depict them in Figure~\ref{fig:polygon}.

\begin{table}[ht]
\caption{Improving solutions for polygon packing}\label{tab:polygon}
\begin{tabular}{@{}rrrllr@{}}
\toprule
$\ell$ & $m$ & $n$ & Circumradius & Previous & Source\\
\midrule
4 & 3 & 12 & 3.13\textbf{403} & 3.13802 & Cantrell 2002 \\
5 & 3 & 6 & 2.05\textbf{332} & 2.05521 & Morandi 2012 \\
5 & 3 & 10 & 2.\textbf{67560} & 2.71218 & Cantrell 2012 \\
5 & 3 & 11 & 2.\textbf{75788} & 2.80900 & Cantrell 2012 \\
5 & 3 & 12 & 2.9\textbf{0916} & 2.93400 & Cantrell 2012 \\
5 & 3 & 13 & \textbf{2.97298} & 3.00600 & Cantrell 2012 \\
5 & 3 & 14 & 3.\textbf{07677} & 3.13711 & Morandi 2012 \\
5 & 4 & 9 & 3.2\textbf{4086} & 3.28500 & Hirsh 2022 \\
5 & 4 & 10 & 3.3\textbf{6024} & 3.38000 & Cantrell 2012 \\
5 & 4 & 11 & 3.\textbf{47760} & 3.51400 & Cantrell 2012 \\
5 & 4 & 12 & 3.57\textbf{255} & 3.57700 & Cantrell 2012 \\
5 & 4 & 13 & 3.75\textbf{495} & 3.75575 & Cantrell 2012 \\
6 & 3 & 10 & 2.59\textbf{297} & 2.59808 & Friedman 2005 \\
6 & 3 & 12 & 2.81\textbf{217} & 2.81900 & Morandi 2008 \\
6 & 3 & 13 & 2.88\textbf{480} & 2.88676 & Friedman 2005 \\
6 & 6 & 11 & 3.92\textbf{451} & 3.93010 & \cite{georgiev2025mathematical,novikov2025alphaevolve} \\
6 & 6 & 12 & 3.941\textbf{65} & 3.94192 & \cite{georgiev2025mathematical,novikov2025alphaevolve} \\
6 & 6 & 14 & 4.2\textbf{6900} & 4.27240 & \cite{friedman2015hexagon} \\
6 & 6 & 15 & 4.44\textbf{728} & 4.45406 & \cite{friedman2015hexagon} \\
6 & 6 & 16 & 4.5\textbf{2788} & 4.53633 & \cite{friedman2015hexagon} \\
6 & 6 & 17 & 4.61\textbf{362} & 4.61881 & \cite{friedman2015hexagon} \\
6 & 6 & 23 & 5.40\textbf{001} & 5.42858 & \cite{friedman2015hexagon} \\
\bottomrule
\end{tabular}
\end{table}

\noindent\begin{minipage}{\linewidth}
\begin{center}
\includegraphics[width=0.20\linewidth]{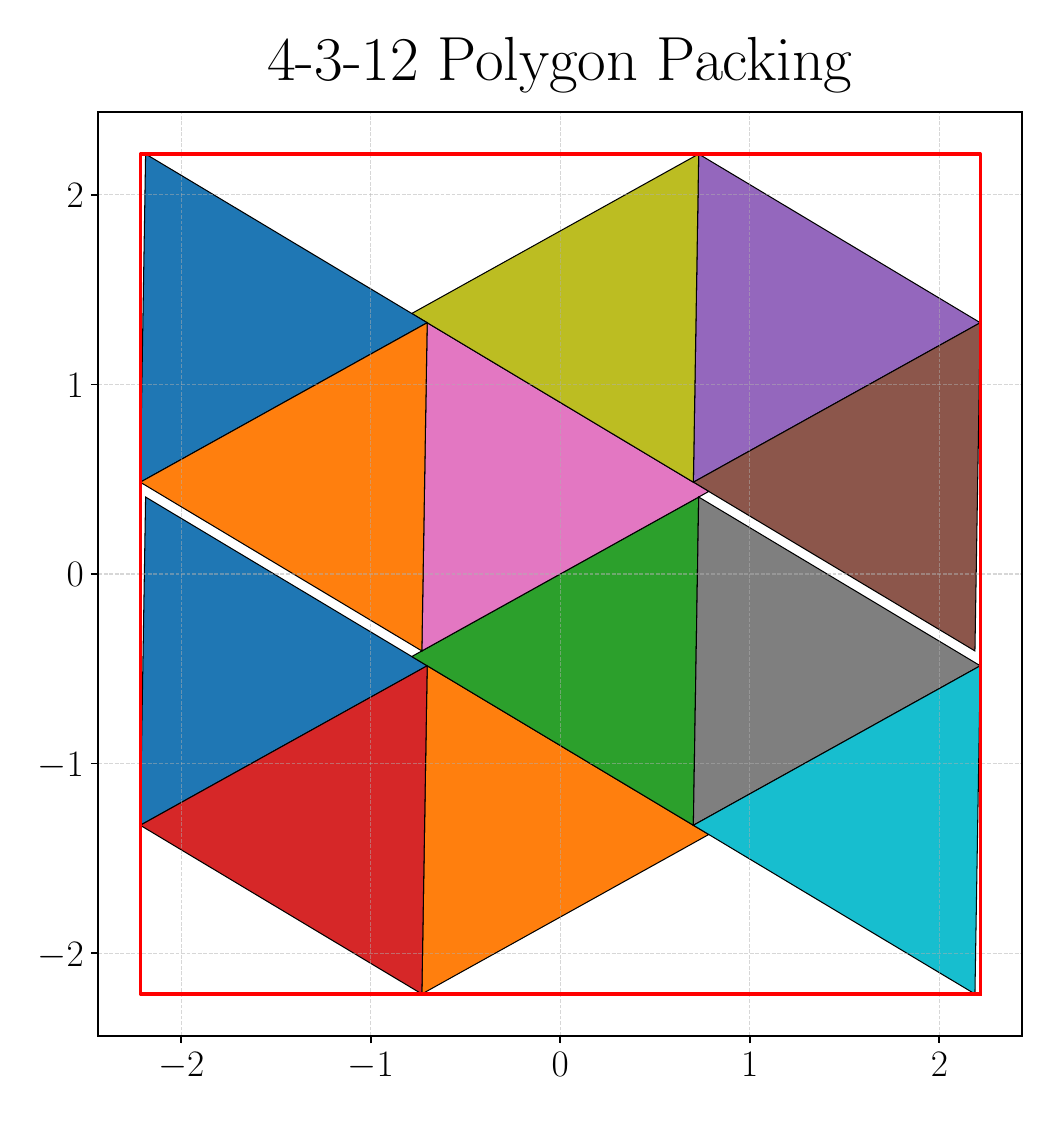}
\includegraphics[width=0.20\linewidth]{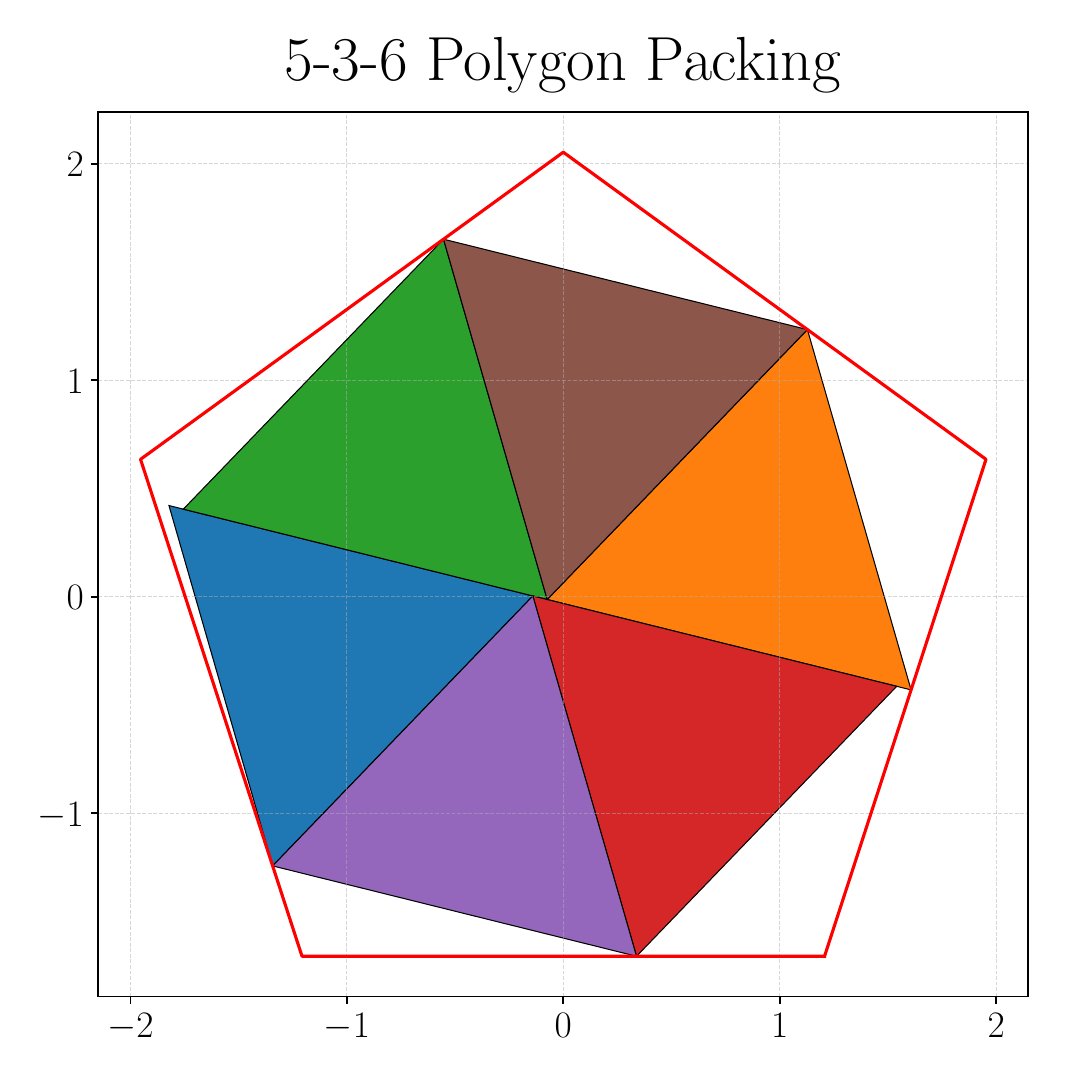}
\includegraphics[width=0.20\linewidth]{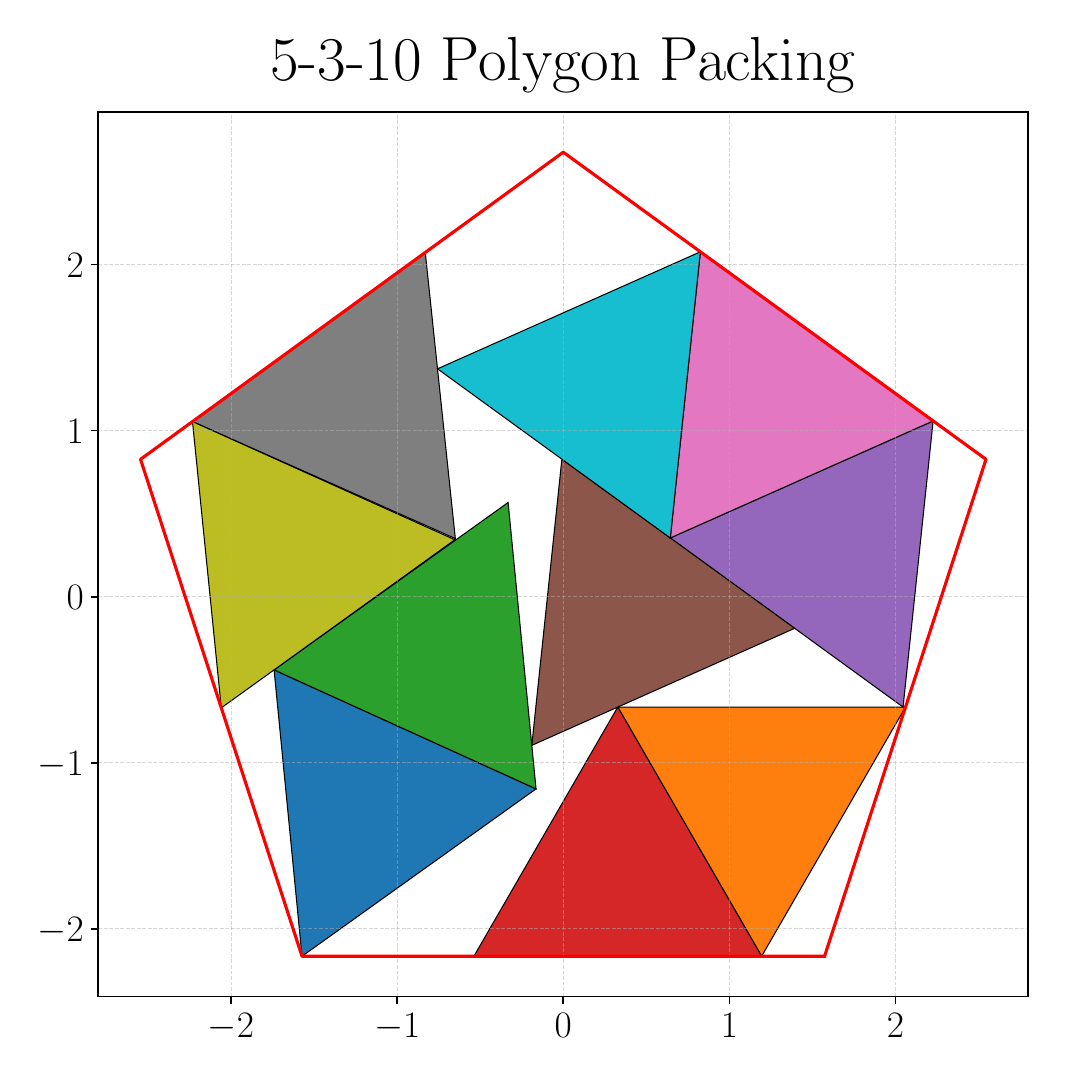}
\includegraphics[width=0.20\linewidth]{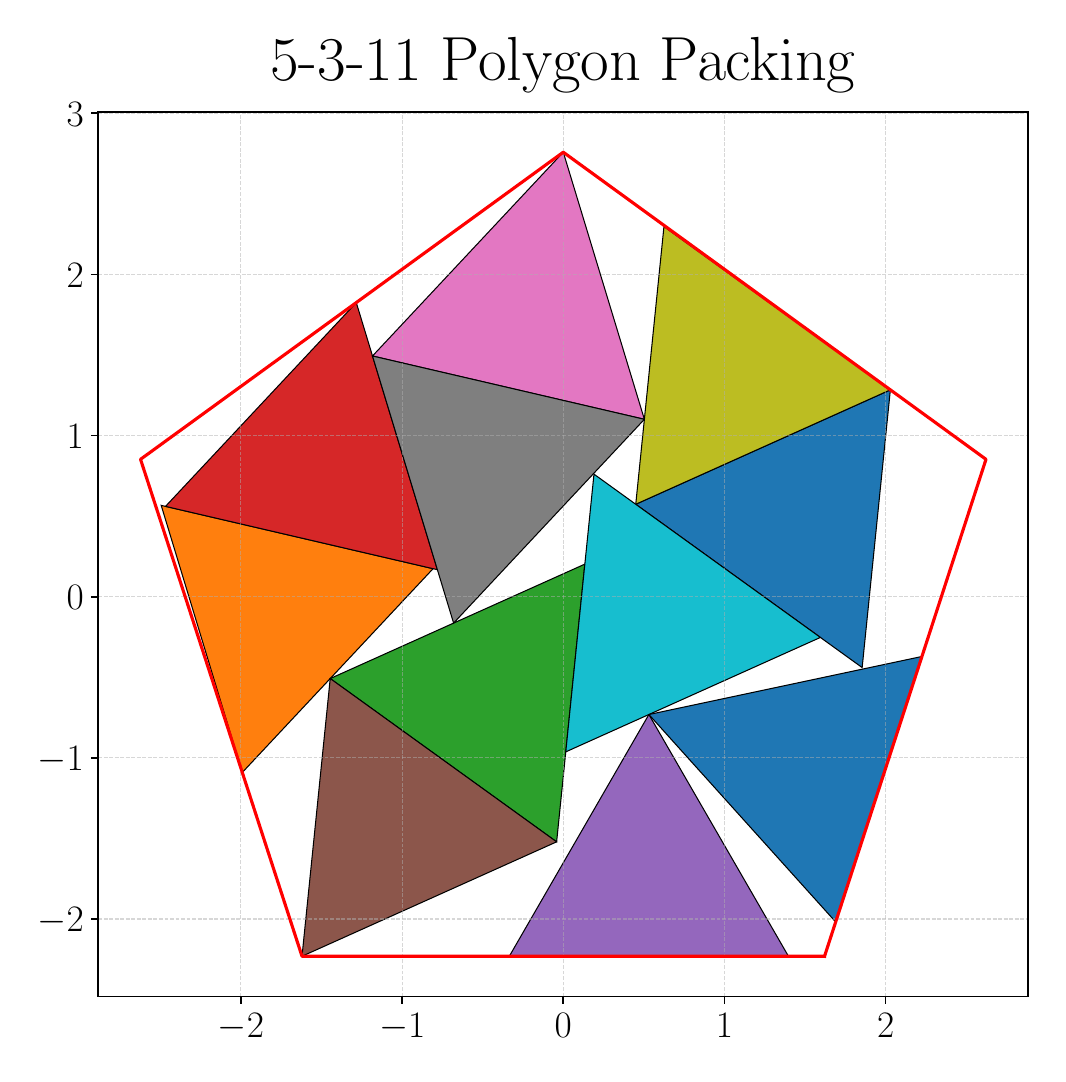}\\
\includegraphics[width=0.20\linewidth]{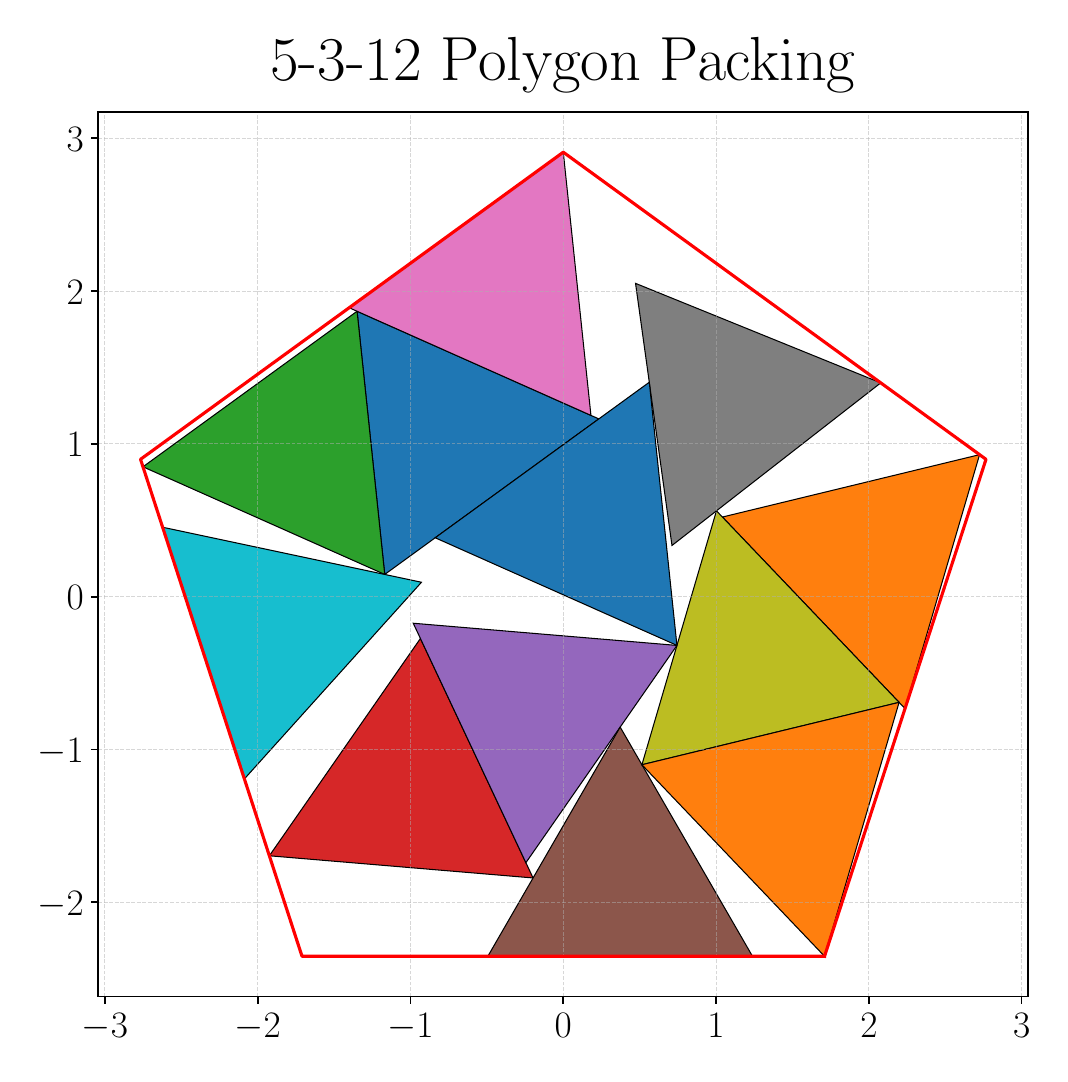}
\includegraphics[width=0.20\linewidth]{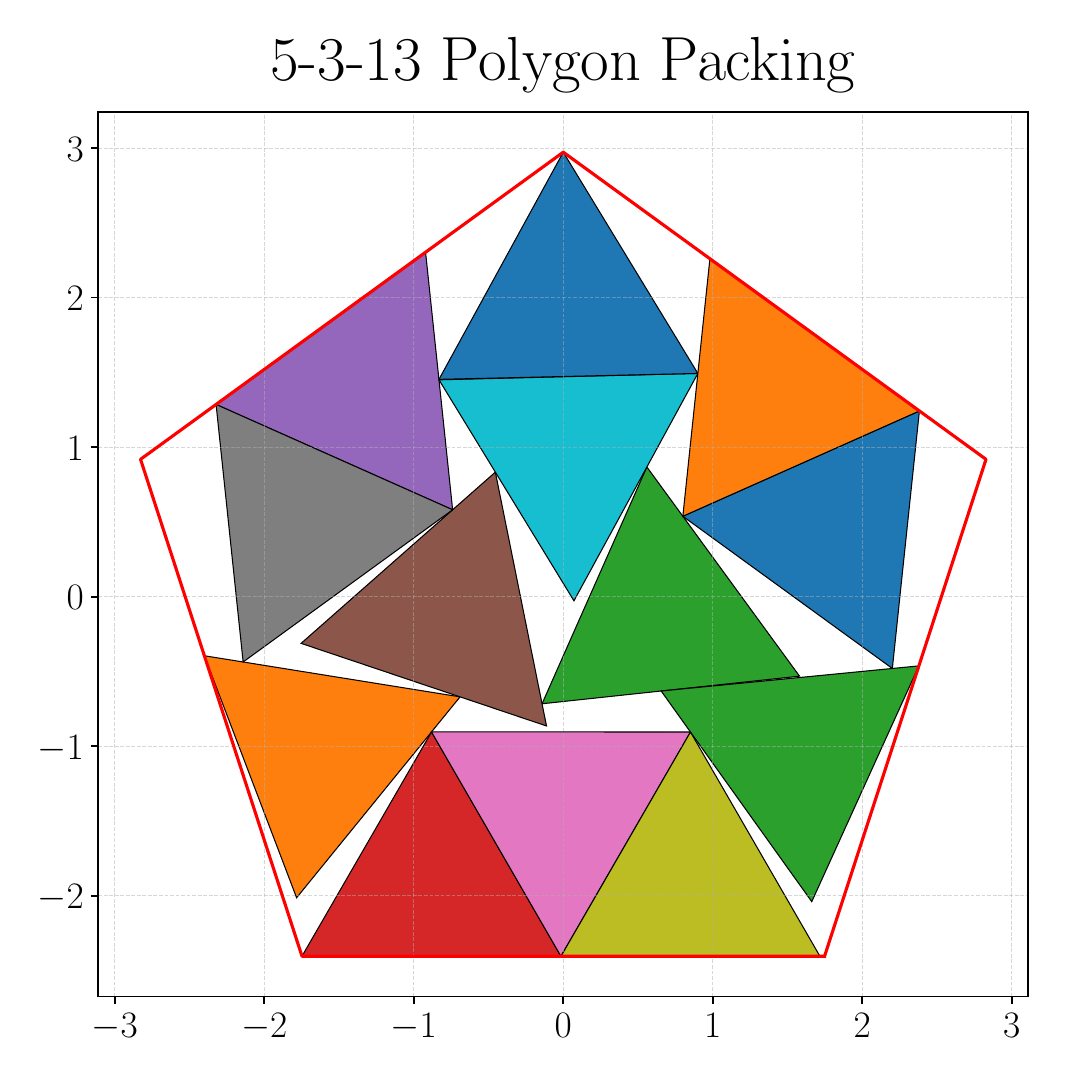}
\includegraphics[width=0.20\linewidth]{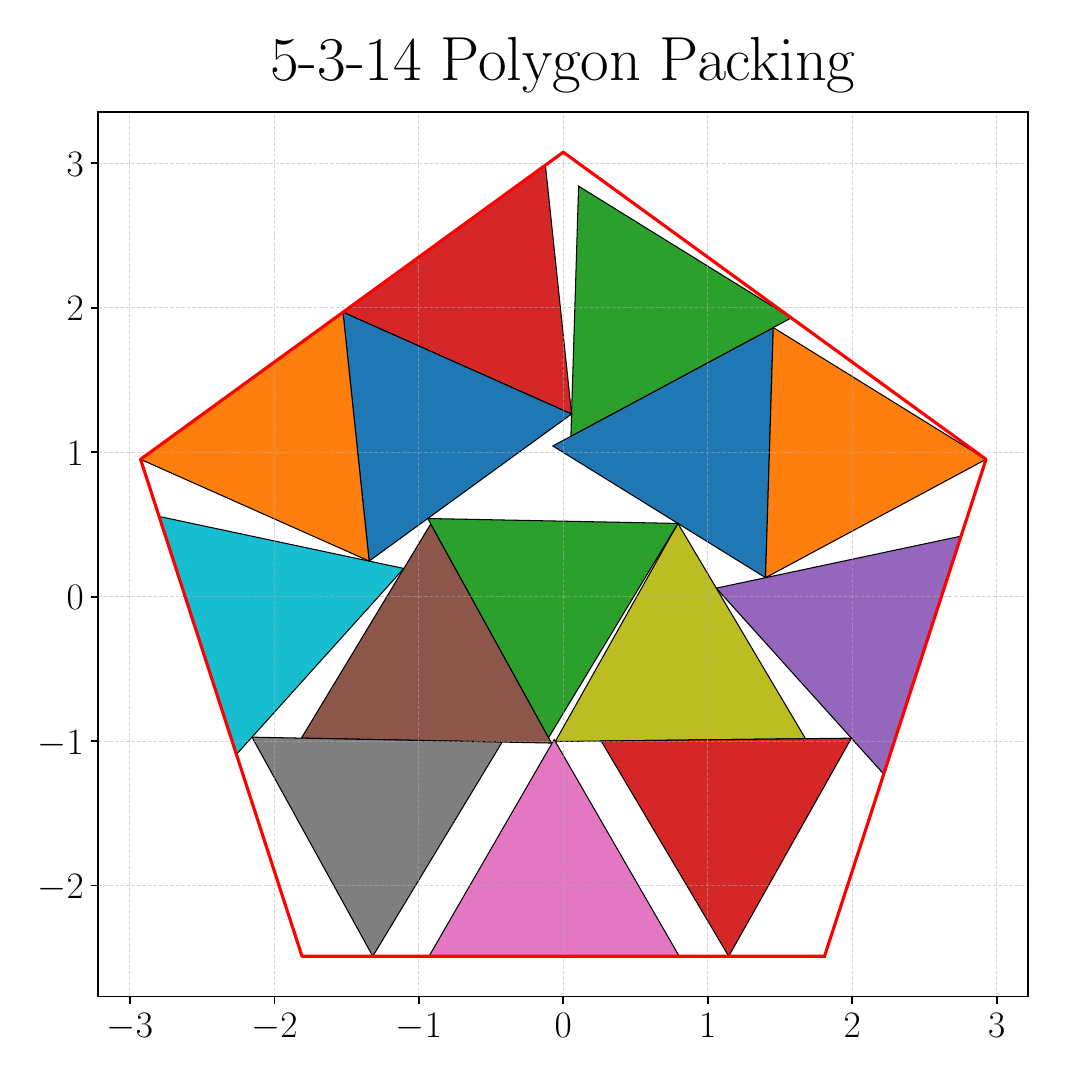}
\includegraphics[width=0.20\linewidth]{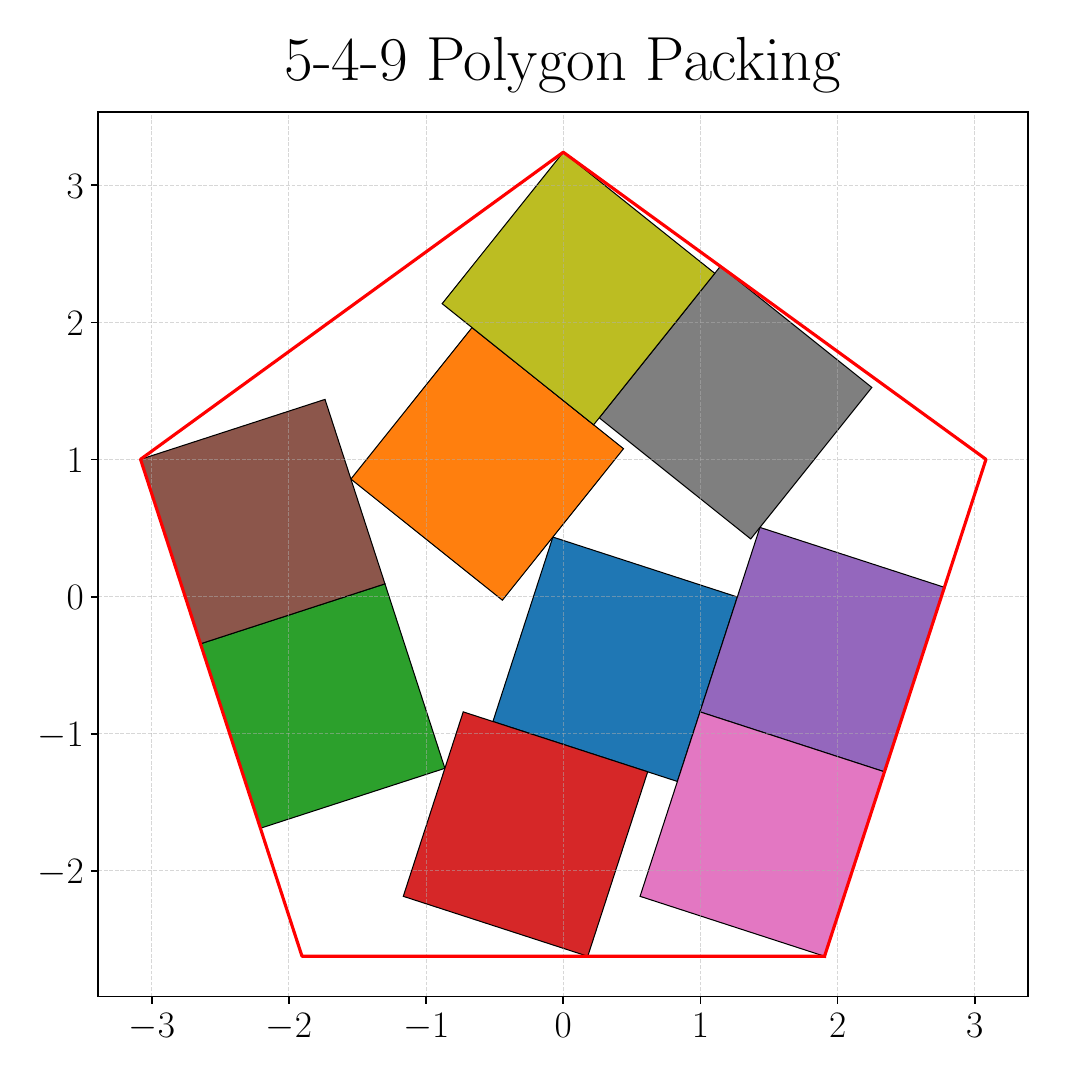}\\
\includegraphics[width=0.20\linewidth]{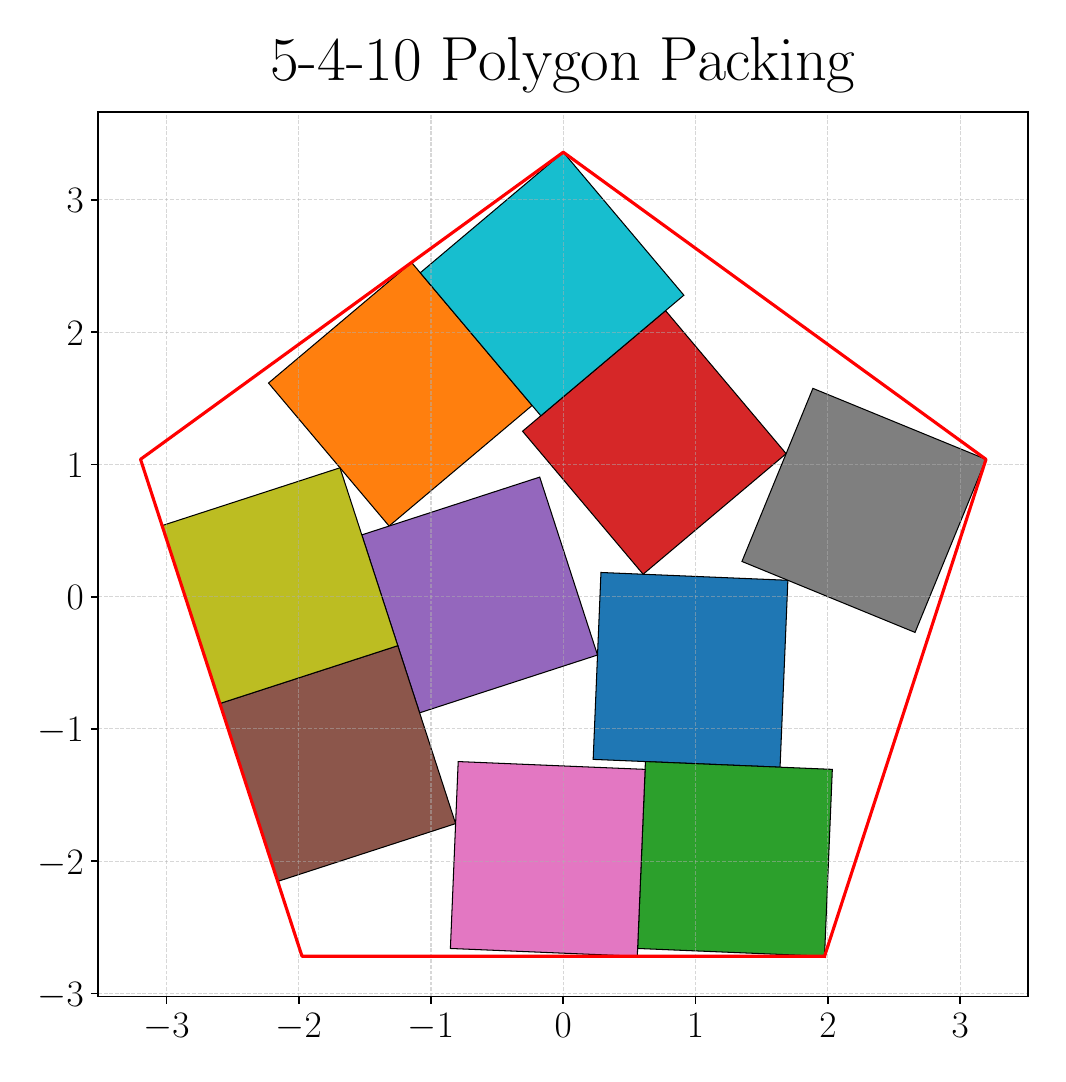}
\includegraphics[width=0.20\linewidth]{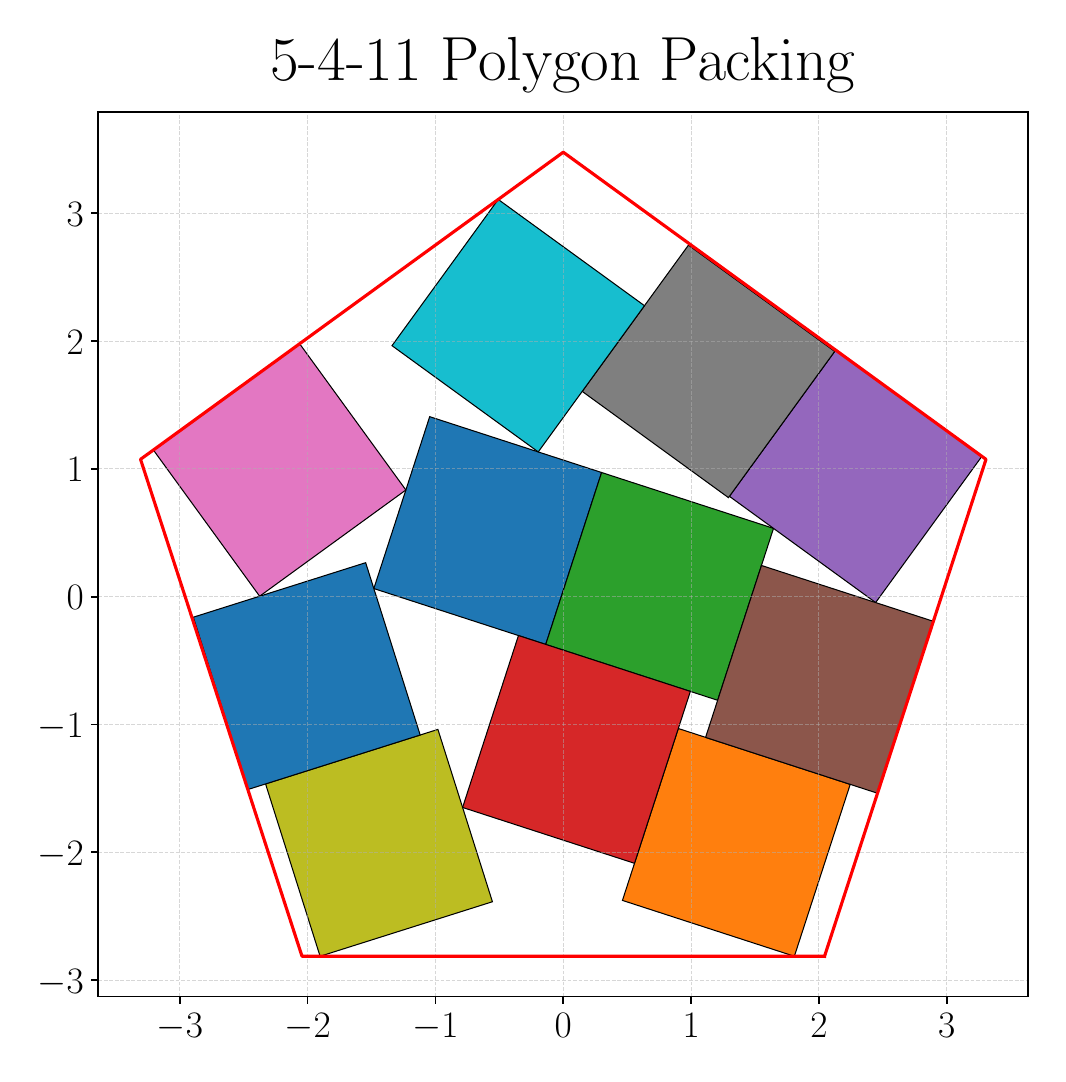}
\includegraphics[width=0.20\linewidth]{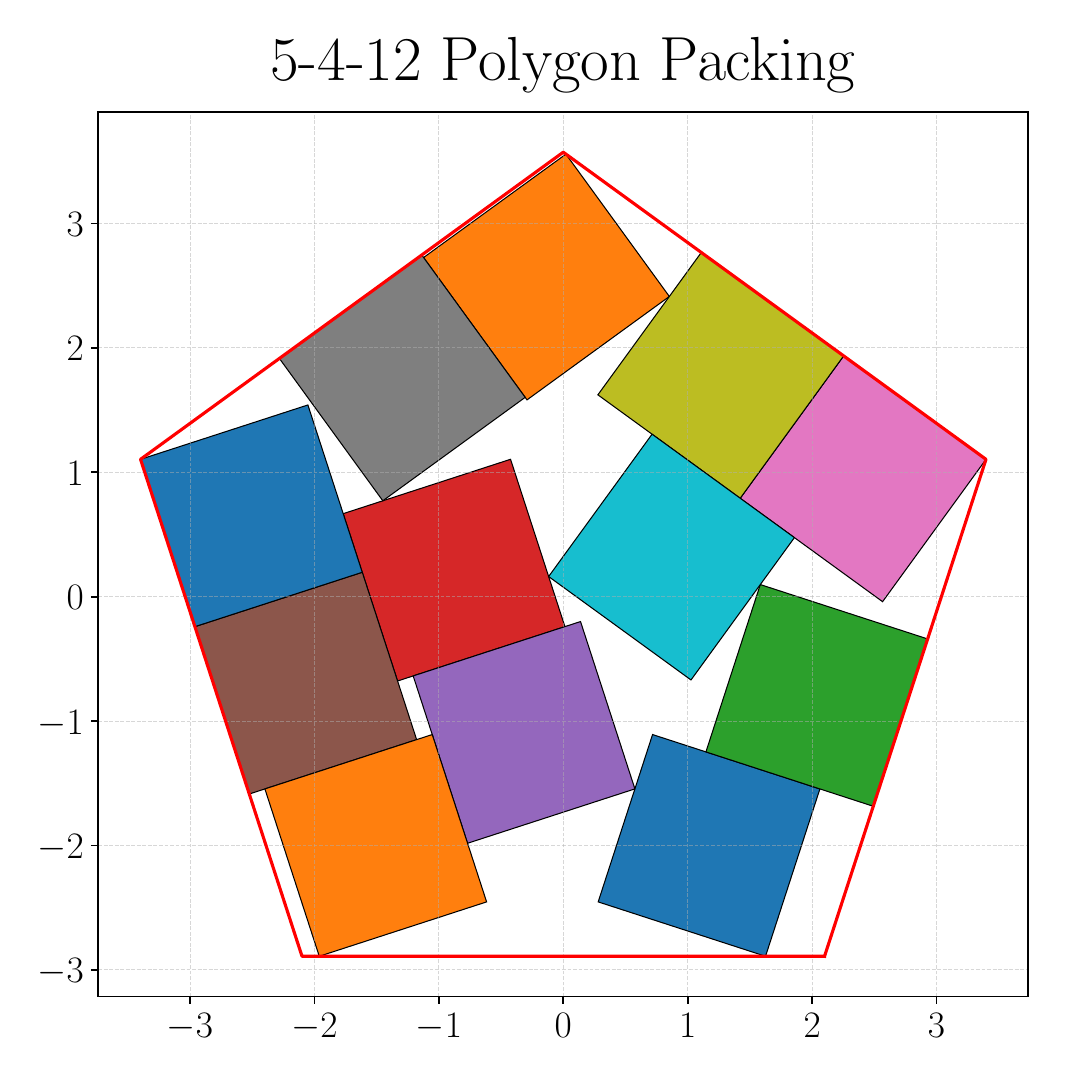}
\includegraphics[width=0.20\linewidth]{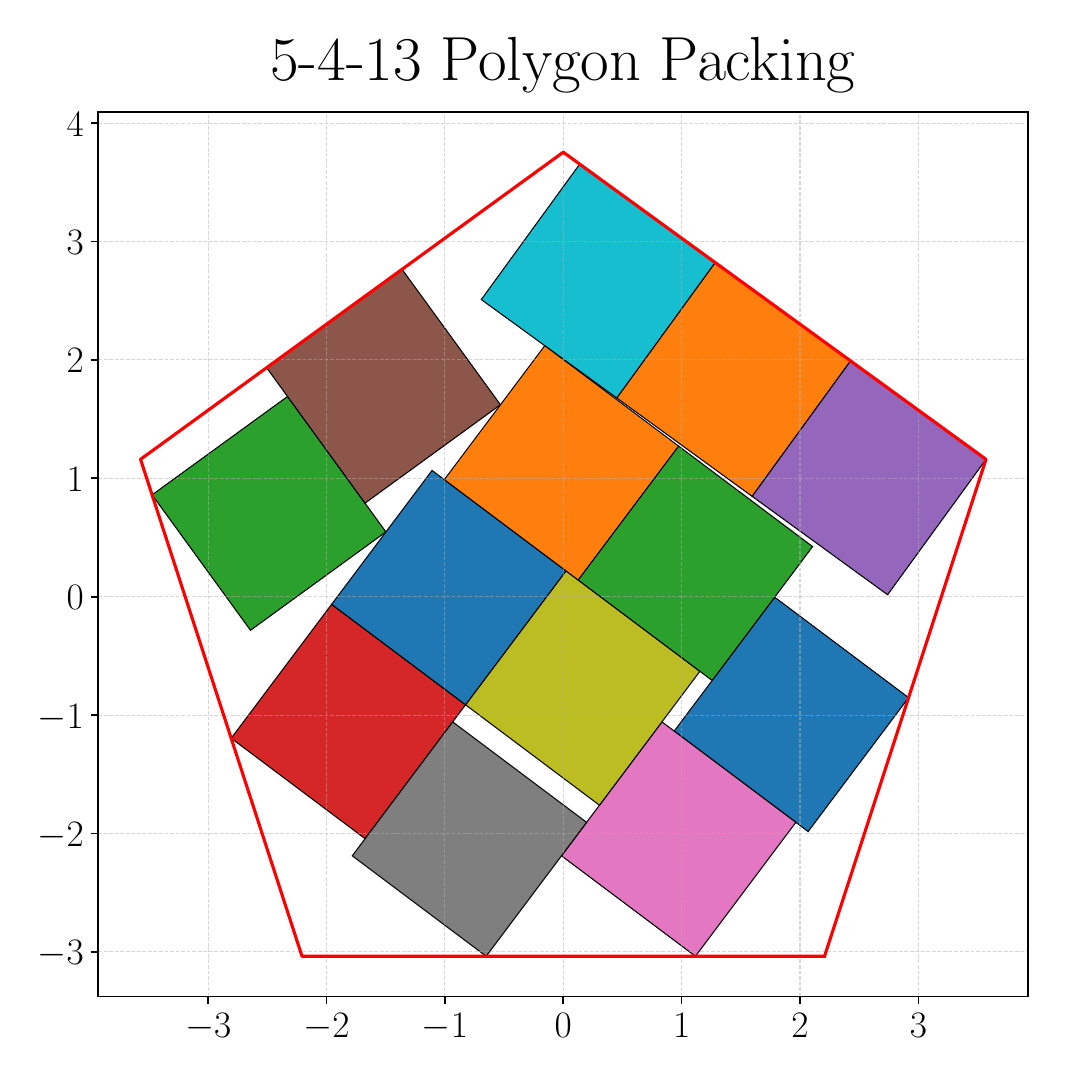}\\
\includegraphics[width=0.20\linewidth]{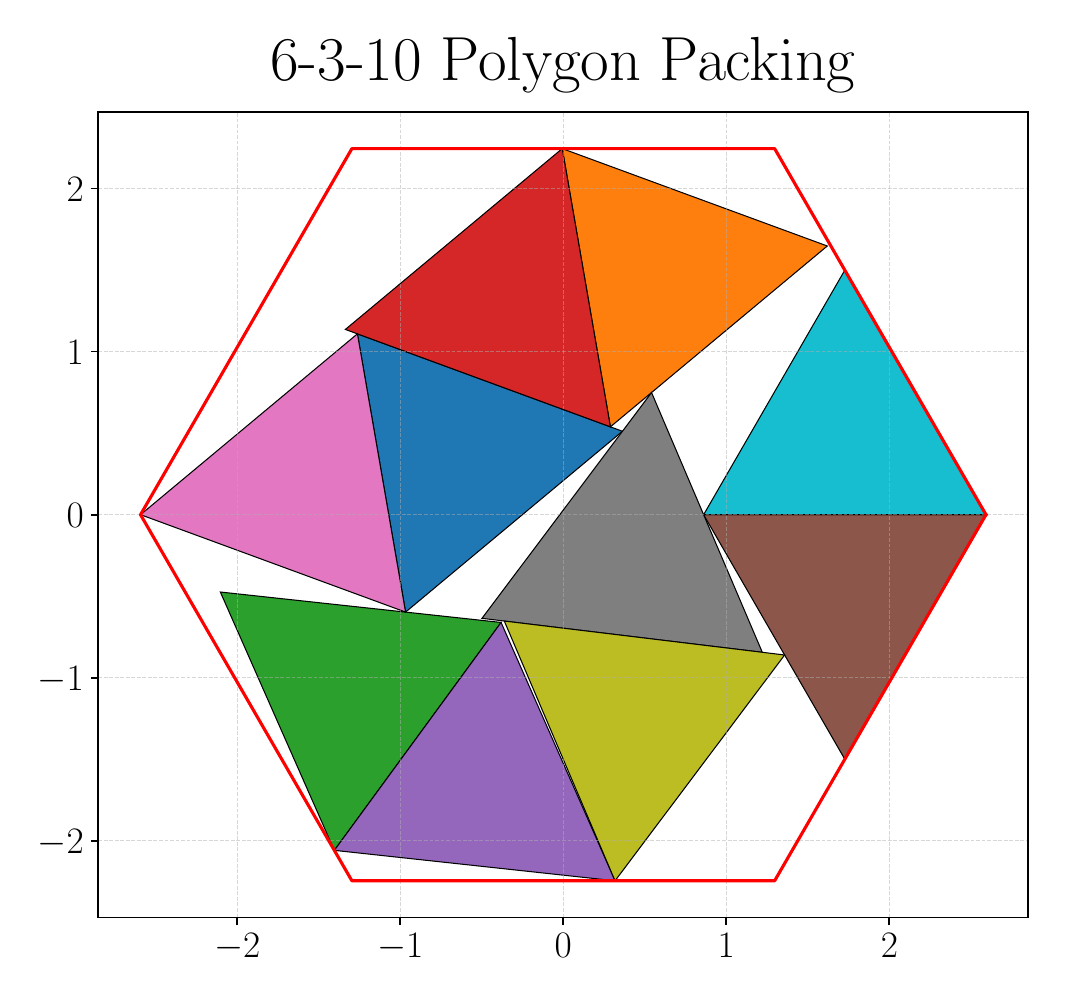}
\includegraphics[width=0.20\linewidth]{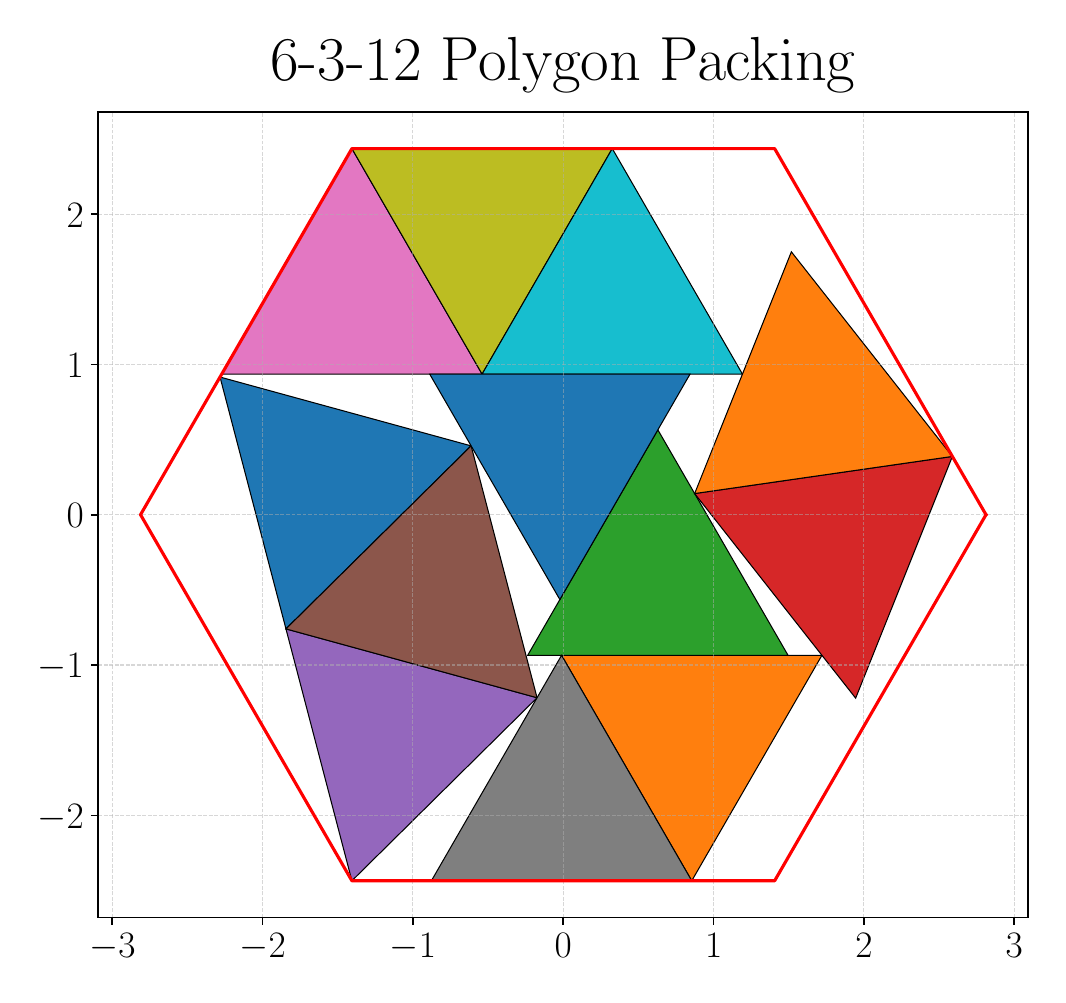}
\includegraphics[width=0.20\linewidth]{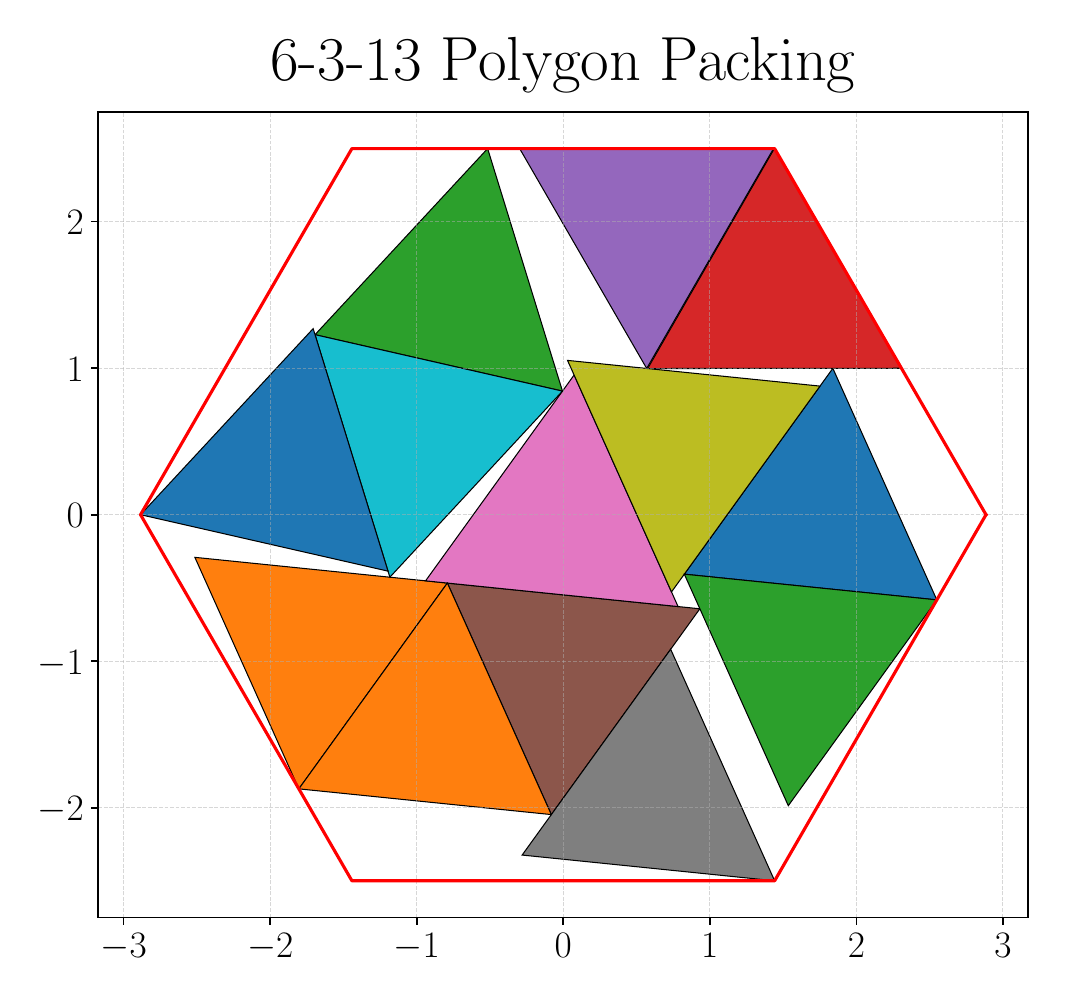}
\includegraphics[width=0.20\linewidth]{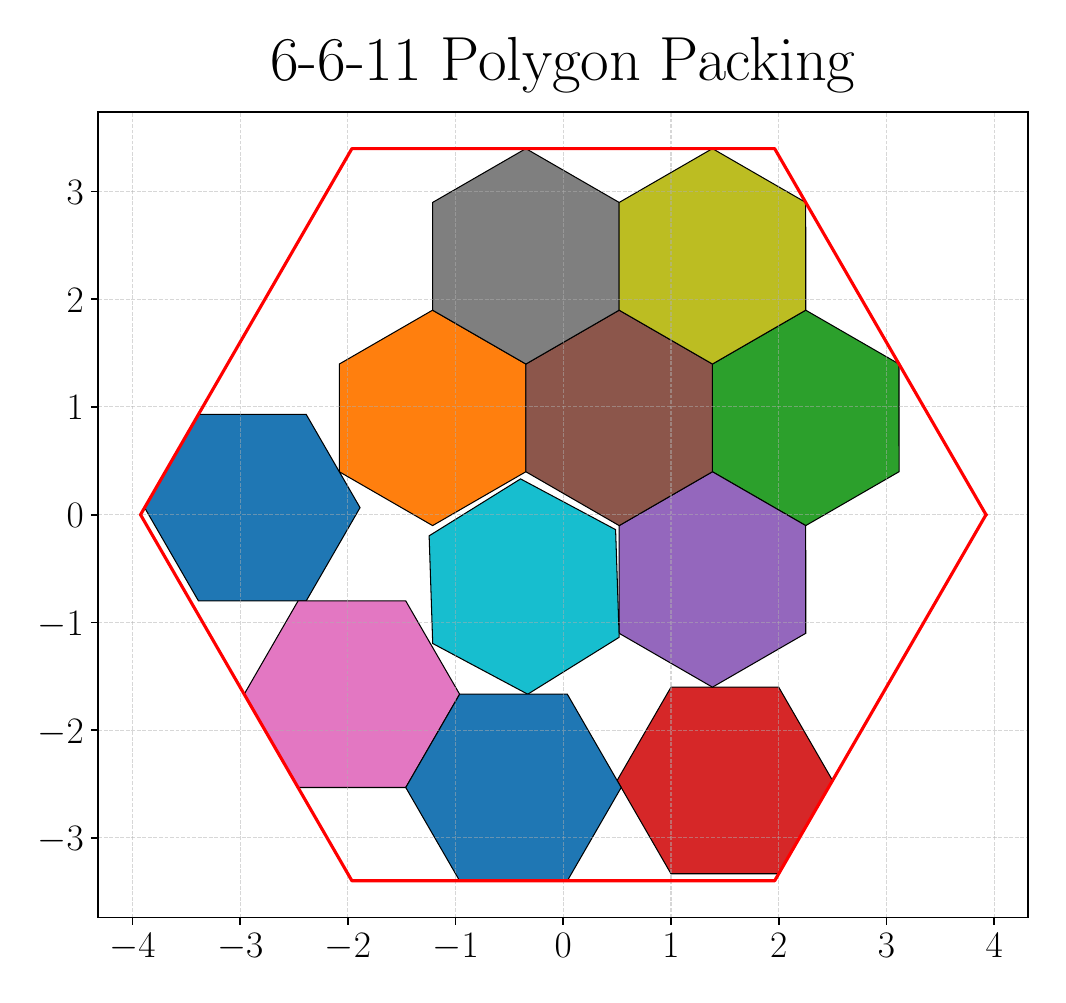}\\
\includegraphics[width=0.20\linewidth]{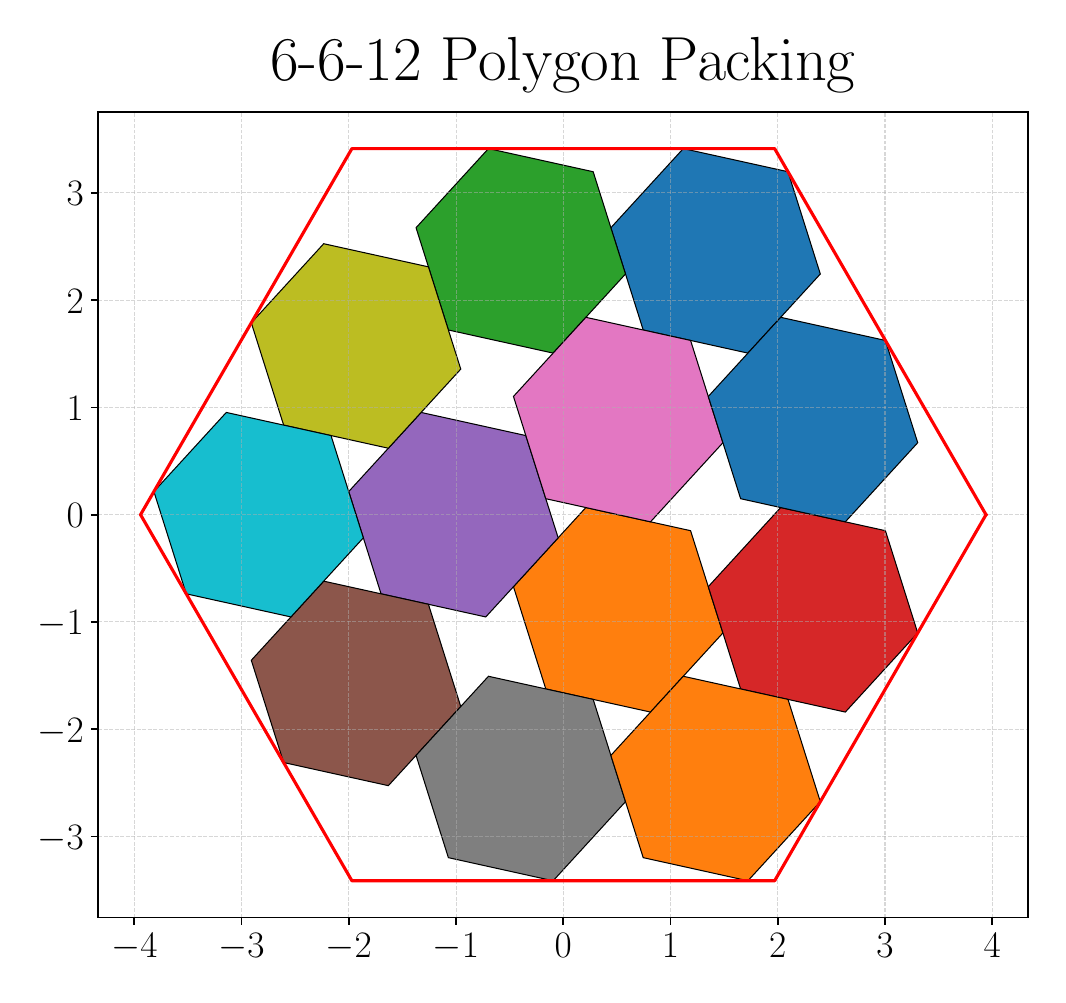}
\includegraphics[width=0.20\linewidth]{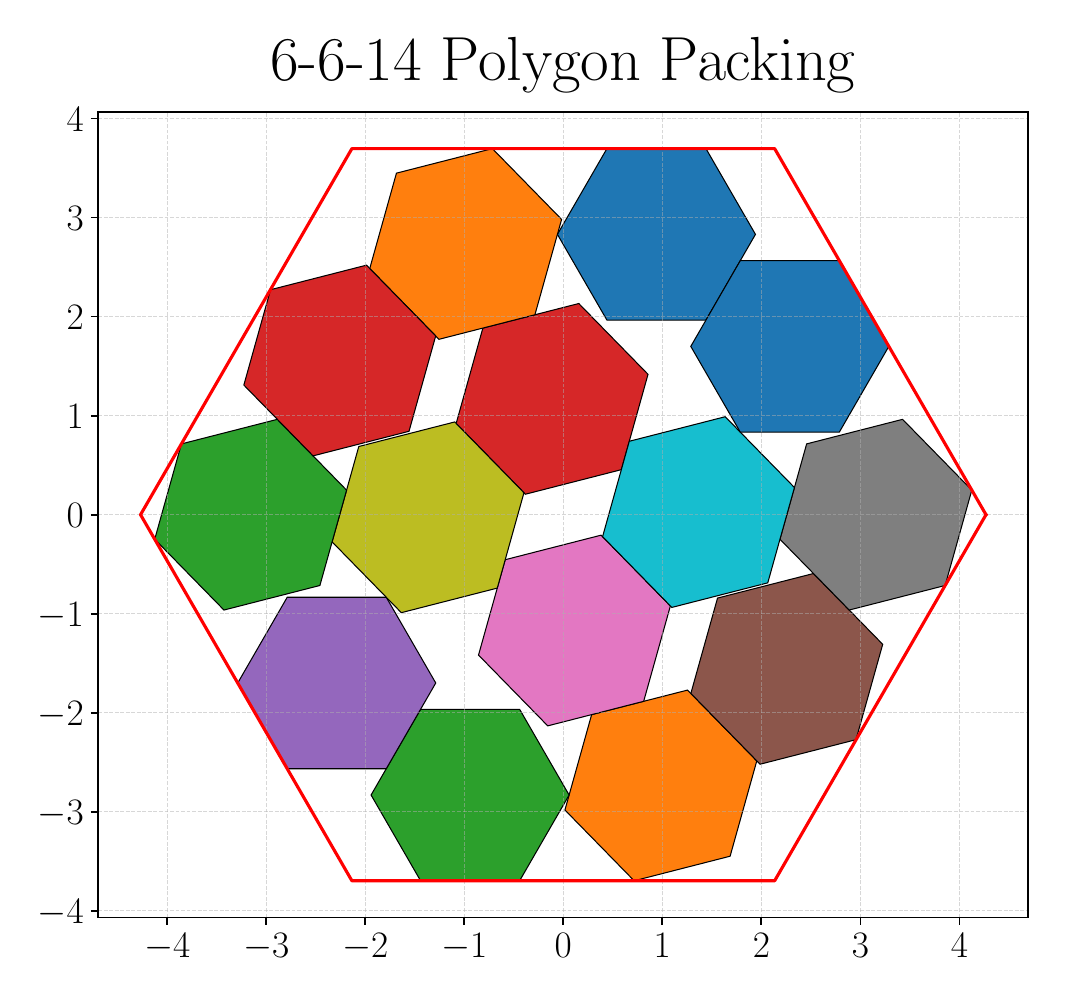}
\includegraphics[width=0.20\linewidth]{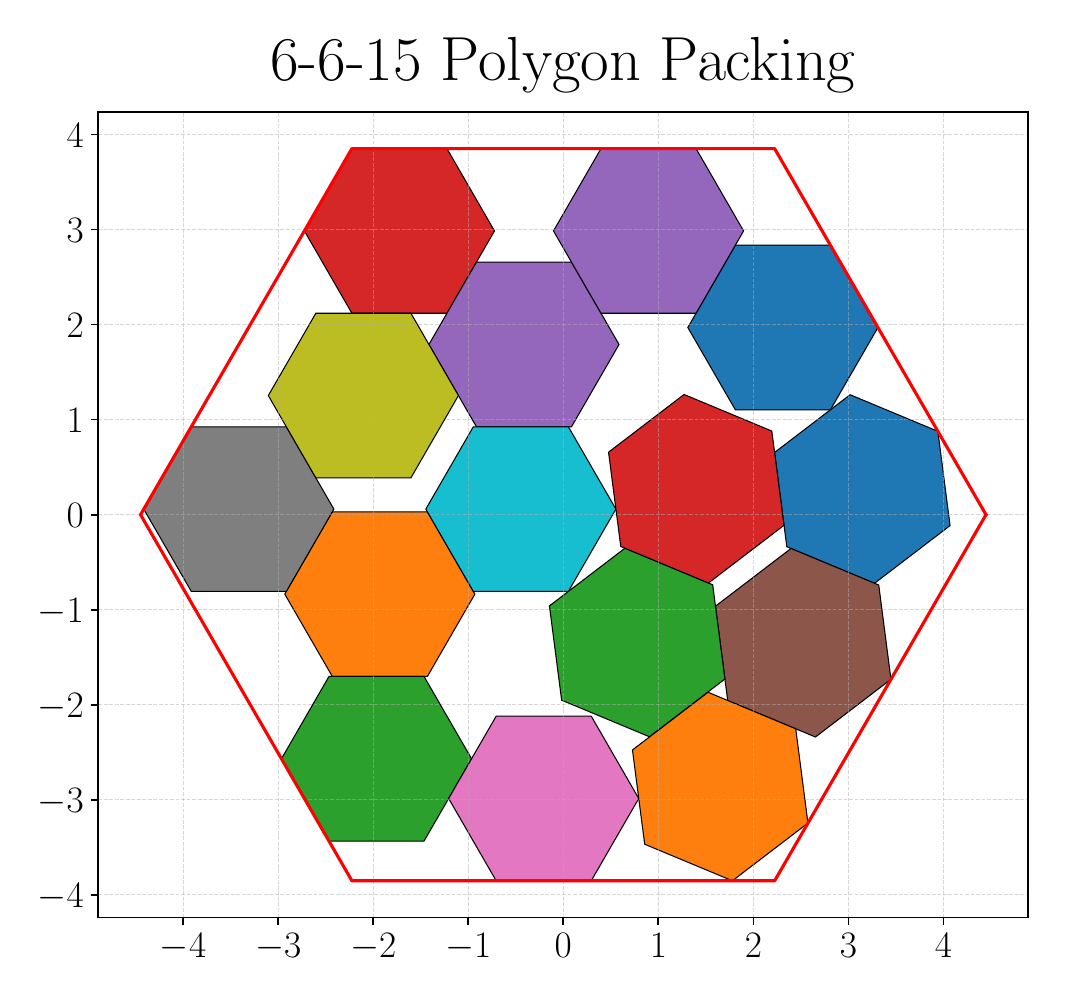}
\includegraphics[width=0.20\linewidth]{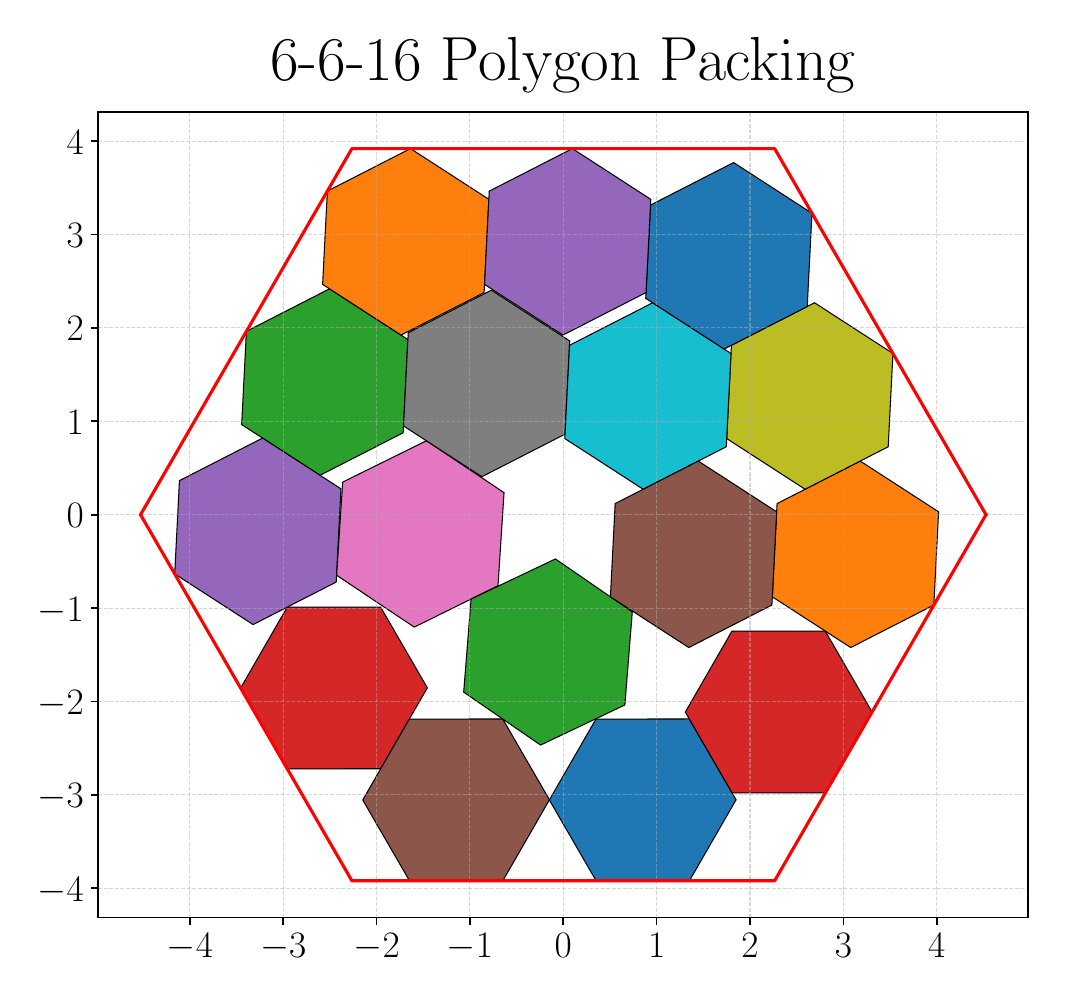}\\
\includegraphics[width=0.20\linewidth]{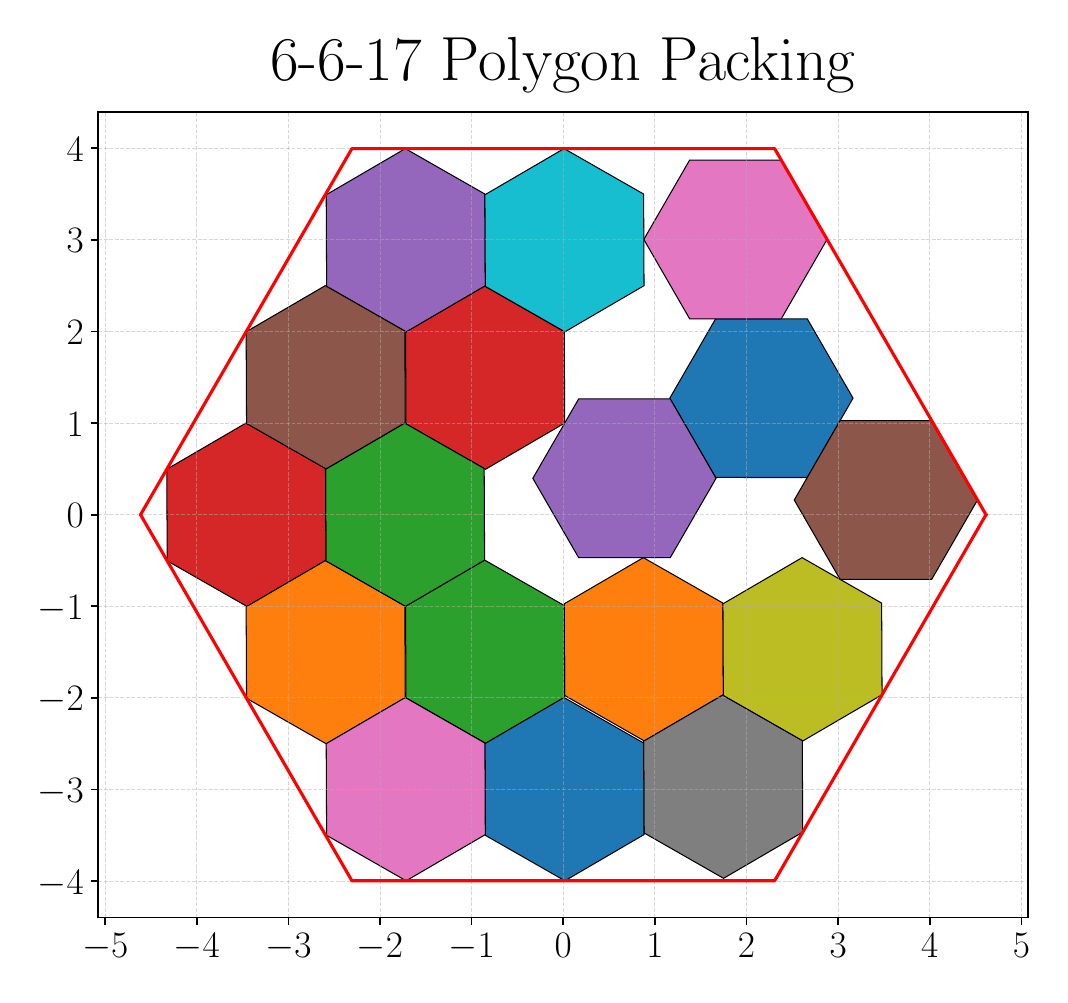}
\includegraphics[width=0.20\linewidth]{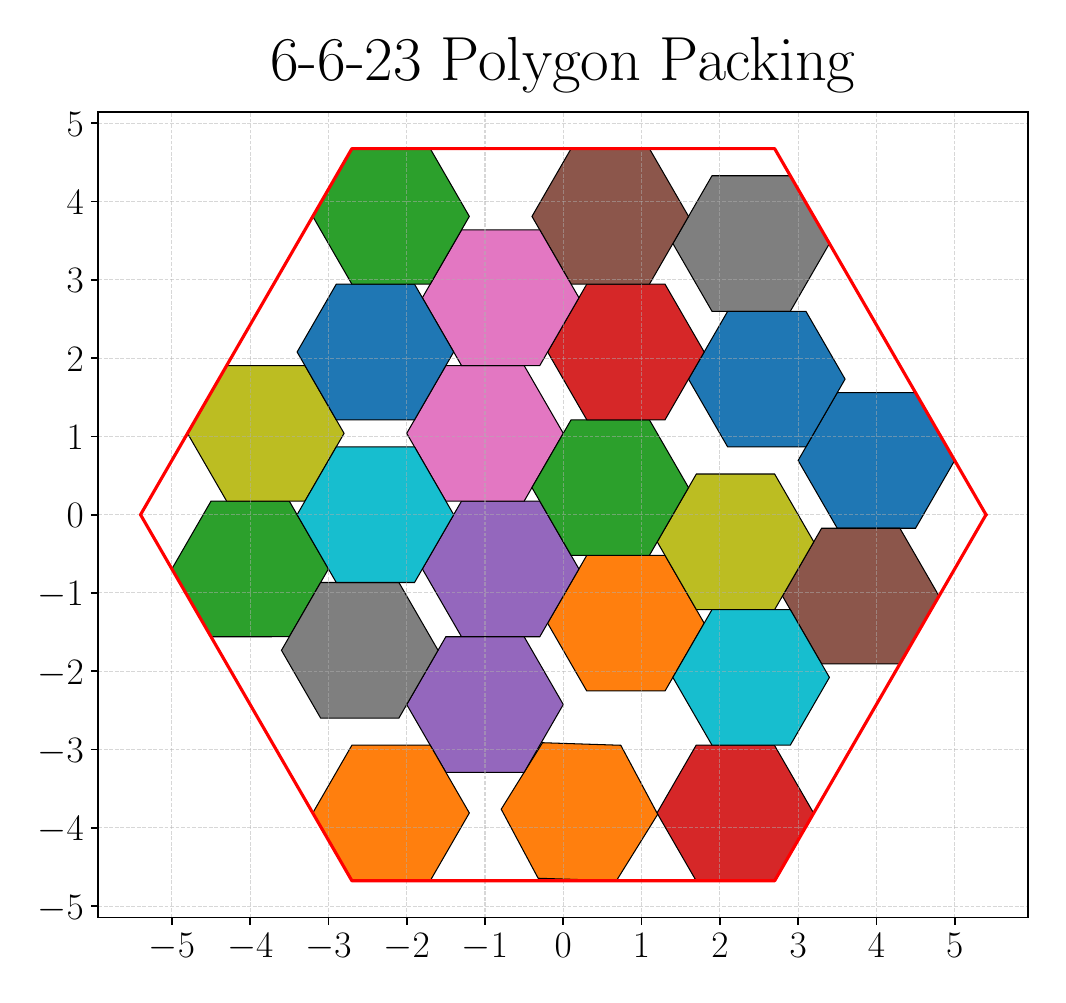}
\end{center}
\captionof{figure}{Graphical representation of the solutions for the polygon packing problem}\label{fig:polygon}
\end{minipage}

The median optimality gap across $(\ell,m)$ groups ranges from $9.9\%$ to $15.2\%$ (average $13.0\%$), taking the best primal and dual bounds across all solver setups.
For small $n$, the setups tighten the dual bound beyond the area lower bound $R_{\min}$, proving optimality for one and two inner elements and several instances with $n \leq 5$, while for $n \geq 6$, the dual bound generally remains at $R_{\min}$.

\subsubsection{Model Variants}\label{subsubsec:polygonvariants}

To investigate the impact of formulation choices on solver performance, we test five variants of the polygon packing model.
The formulation \eqref{eq:polygon-obj}--\eqref{eq:polygon-bounds} described above, including the redundant distance constraint~\eqref{eq:polygon-dist}, serves as the base and is denoted \textsc{Dist}.
The remaining four variants modify or extend it as follows.
\begin{description}\itemsep5pt
\item[No distance constraint \textsc{(Nodist)}:] The formulation \textsc{Nodist} removes the redundant distance constraint~\eqref{eq:polygon-dist} from the base formulation.

\item[Inner separation \textsc{(Inner)}:] This variant replaces the Farkas-based separation \eqref{eq:polygon-farkas-sum}--\eqref{eq:polygon-farkas-sep} entirely with direct separating hyperplane constraints.
For each pair $(i,j)$ with $i < j$, a separating direction angle $\alpha_{i,j} \in [0, 2\pi]$ and offset $d_{i,j} \in \mathbb{R}$ are introduced as decision variables in place of the Farkas multipliers.
All vertices of polygon~$i$ must lie on the upper side of the hyperplane and all vertices of polygon~$j$ on the lower side:
\begin{subequations}
\begin{align}
  \sin(\alpha_{i,j})\bigl(x_i + \sin(\theta_i + \delta_{m,k})\bigr)
  + \cos(\alpha_{i,j})\bigl(y_i + \cos(\theta_i + \delta_{m,k})\bigr) &\geq d_{i,j}
  \label{eq:inner-sep-upper} \\
  \sin(\alpha_{i,j})\bigl(x_j + \sin(\theta_j + \delta_{m,k})\bigr)
  + \cos(\alpha_{i,j})\bigl(y_j + \cos(\theta_j + \delta_{m,k})\bigr) &\leq d_{i,j}
  \label{eq:inner-sep-lower}
\end{align}
\end{subequations}
for all $i,j \in \mathcal{N}$ with $i < j$ and $k \in \mathcal{M}$.
This formulation uses fewer variables per pair ($2$ instead of $2m$ Farkas multipliers) but more constraints ($2m$ instead of $4$ Farkas constraints including the normalization constraint).

\item[Farkas normalization \textsc{(Farkas)}:] This variant replaces the overall normalization of Farkas multipliers~\eqref{eq:polygon-farkas-sum} with a lower bound for each polygon:
\begin{subequations}
\begin{align}
  \sum_{k \in \mathcal{M}} \lambda_{i,j,k+1} &\geq 1
  \label{eq:polygon-farkas-perpolygon-i} \\
  \sum_{k \in \mathcal{M}} \lambda_{i,j,k+m+1} &\geq 1
  \label{eq:polygon-farkas-perpolygon-j}
\end{align}
\end{subequations}
for all $i,j \in \mathcal{N}$ with $i < j$.
The Farkas sum for each polygon needs to be positive because only the intersection of edge constraints across both full-dimensional polygons can be lower-dimensional to prove separation.
The remaining Farkas constraints \eqref{eq:polygon-farkas-x}--\eqref{eq:polygon-farkas-sep} are unchanged.

\item[Symmetry breaking \textsc{(Sym)}:] This variant extends \textsc{Dist} with elementary symmetry-breaking constraints.
By permutation of variable assignments, the coordinates can always be ordered in non-decreasing order of the $x$-coordinate as in~\eqref{eq:symsortx}.
Furthermore, the $\ell$-fold rotational and reflection symmetries of the outer polygon and of each inner polygon assignment along the outer polygon axes make it possible to restrict the centroid of all polygon centers to the cone spanned by the first half of the first angular sector:
\begin{subequations}
\begin{align}
  \sum_{i \in \mathcal{N}} x_i &\geq 0
  \label{eq:polygon-sym-cx} \\
  -\cos(\phi_\ell/2) \sum_{i \in \mathcal{N}} x_i + \sin(\phi_\ell/2) \sum_{i \in \mathcal{N}} y_i &\geq 0
  \label{eq:polygon-sym-cy}
\end{align}
\end{subequations}
Since permutation of elements does not change the coordinate sums, both symmetry reductions can be applied simultaneously.
\end{description}

Table~\ref{tab:polygon_overall} reports, across all instances and for each formulation, solutions found (\emph{found}), median and mean per-instance relative difference to \textsc{Dist}, and counts of instances where a formulation strictly beats (\emph{better}) or is strictly beaten by (\emph{worse}) \textsc{Dist}.

\begin{longtable}{rrrrrr}
  \caption{Polygon packing: overall formulation comparison across 750 instances. Solutions found, median and mean relative difference to \textsc{Dist}, and diff counts per formulation.} \label{tab:polygon_overall} \\
  \toprule
  metric & \textsc{Dist} & \textsc{Nodist} & \textsc{Inner} & \textsc{Farkas} & \textsc{Sym} \\
  \midrule
  \endfirsthead
  \multicolumn{6}{l}{\tablename\ \thetable{} -- \textit{continued}} \\
  \toprule
  metric & \textsc{Dist} & \textsc{Nodist} & \textsc{Inner} & \textsc{Farkas} & \textsc{Sym} \\
  \midrule
  \endhead
  \endfoot
  \bottomrule
  \endlastfoot
  found & 734 & 416 & 358 & 329 & 737 \\
  median & +0.000\% & +0.000\% & +0.575\% & +0.000\% & +2.178\% \\
  mean & +0.000\% & +0.897\% & +13.599\% & +0.790\% & +3.698\% \\
  better & 0 & 4 & 1 & 1 & 2 \\
  worse & 0 & 187 & 200 & 139 & 539 \\
\end{longtable}

The redundant distance constraints have the strongest impact, so we use \textsc{Dist} as the reference formulation.
The formulation \textsc{Nodist}, without distance constraints, is strictly worse on~187 instances versus only~4 where it is better.
The alternative separation approaches \textsc{Inner} and \textsc{Farkas} produce weaker results, while \textsc{Inner} additionally faces a~$0.575\%$ median ($13.6\%$ mean) quality degradation.
\textsc{Sym} produces the worst solution quality overall: the median objective is $2.178\%$ ($3.7\%$ mean) worse than~\textsc{Dist}, and~539 instances are strictly worse versus only~2 where it is better.
This suggests that symmetry breaking impedes performance of heuristics by unnecessarily enforcing certain orders and locations to accept a configuration.

\section{Packing Platonic Solids} 
\label{sec:platonicpacking}

The problem of packing $n$ identical Platonic solids of a certain type into a Platonic solid of the same or different type of minimum volume\footnote{Or, equivalently, surface area, or circumradius.}
extends the polygon packing problem to three dimensions, where each inner solid can rotate freely and non-overlap
conditions must account for the specific polyhedral geometry of each element.
The Farkas-based formulation of Section~\ref{sec:polygonpacking} extends naturally to three dimensions, replacing edge
normals by face normals and a single rotation angle by a three-dimensional rotational parametrization.

\subsection{Optimization model} 
\label{subsec:platonicmodel}

The problem is to pack $n$ Platonic solids of type $m$ with unit circumradius into a Platonic solid of type $\ell$,
minimizing the circumradius $R$ of the outer solid, where the types $1, \ldots, 5$ correspond to the five Platonic solids: tetrahedron, octahedron, cube, icosahedron, and dodecahedron, respectively.
Let $\mathcal{N} = \{1, 2, \ldots, n\}$ denote the set of inner solids,
$\mathcal{V}_m$ the set of vertices for inner solid type $m$,
$\mathcal{F}_m$ the set of faces for inner solid type $m$, and
$\mathcal{F}_\ell$ the set of faces for outer solid type $\ell$.
We use the following geometric constants for Platonic solids with unit circumradius:
\begin{description}
\item[$\mathbf{v}^j_m \in \mathbb{R}^3$:] standard position for vertex $j \in \mathcal{V}_m$
\item[$\mathbf{n}^f_m \in \mathbb{R}^3$:] standard inward normal for face $f \in \mathcal{F}_m$
\item[$\mathbf{n}^k_\ell \in \mathbb{R}^3$:] standard inward normal for face $k \in \mathcal{F}_\ell$
\item[$\rho_m$:] inradius of inner solid type $m$
\item[$\rho_\ell$:] inradius of outer solid type $\ell$
\item[$R_{\min} = (n \cdot V_m / V_\ell)^{1/3}$:] lower bound on circumradius from volume requirement, where $V_m$, $V_\ell$ are the unit volumes of solid type $m$ and $\ell$
\end{description}
We define the following decision variables:
\begin{description}
\item[$R \ge 0$:] circumradius of the scaled outer solid (to be minimized)
\item[$(x_i, y_i, z_i) \in \mathbb{R}^3$:] coordinates of the center of inner solid $i \in \mathcal{N}$
\item[$\theta_i, \iota_i, \kappa_i \in \lbrack 0, 2\pi \rbrack$:] ZYX-rotation angles of inner solid $i \in \mathcal{N}$
\item[$a_{i,f}, b_{i,f}, c_{i,f} \in \mathbb{R}$:] oriented inward normal components for face $f \in \mathcal{F}_m$ of inner solid $i \in \mathcal{N}$
\item[$e_{i,f} \in \mathbb{R}$:] offset for face $f \in \mathcal{F}_m$ of inner solid $i \in \mathcal{N}$
\item[$\lambda_{i,j,k} \ge 0$:] Farkas multiplier for pair $i, j \in \mathcal{N}$ with $i < j$ and common face index $k \in \{1, \ldots, 2|\mathcal{F}_m|\}$
\end{description}
The pairwise non-overlapping conditions are formulated using the same Farkas certificate as for the polygon case,
extended to three dimensions.
The certificate carries over analogously: two convex polyhedra described by their face half-space representations have
disjoint interiors if and only if there exist nonnegative multipliers for their combined face inequalities, summing to
one, that yield a zero normal sum and a nonnegative offset sum.
For each pair $(i,j)$ with $i < j$, the $2|\mathcal{F}_m|$ Farkas multipliers $\lambda_{i,j,k}$, one per face of each
of the two inner solids, encode this certificate; the geometric interpretation and LP-based reverse mapping of
Section~\ref{subsec:polygonmodel} carry over to three dimensions analogously.
The approach applies to any convex polyhedron without shape-specific derivations.
Since the LP has three equality constraints (one per component of the normal equation in $\mathbb{R}^3$), LP basis
theory guarantees that not more than three positive Farkas multipliers per element and pair are required.

With the $3{\times}3$ ZYX rotation matrix $\mathrm{Rot}(\theta_i, \iota_i, \kappa_i)$, the Platonic solid packing
problem can be formulated as:
\allowdisplaybreaks
\begin{subequations}
\begin{alignat}{2}
    \min\  R& \label{eq:platonic-obj} && \\[0.3em]
    \text{s.t.}\quad
     R\rho_\ell + \mathbf{n}^k_\ell \cdot \left(\begin{pmatrix}x_i\\y_i\\z_i\end{pmatrix} + \mathrm{Rot}(\theta_i, \iota_i, \kappa_i)\, \mathbf{v}^j_m\right) &\ge 0,
    &&\forall i \in \mathcal{N},\ j \in \mathcal{V}_m, \nonumber \\
    &&&k \in \mathcal{F}_\ell
    \label{eq:platonic-containment} \\[0.3em]
    \begin{pmatrix}a_{i,f}\\b_{i,f}\\c_{i,f}\end{pmatrix} = \mathrm{Rot}(\theta_i, \iota_i, \kappa_i)\, \mathbf{n}^f_m,&
    &&\forall i \in \mathcal{N},\ f \in \mathcal{F}_m
    \label{eq:platonic-normal} \\
     e_{i,f} = a_{i,f}x_i + b_{i,f}y_i + c_{i,f}z_i - \rho_m,&
    &&\forall i \in \mathcal{N},\ f \in \mathcal{F}_m
    \label{eq:platonic-offset} \\[0.3em]
     \sum_{k=1}^{2|\mathcal{F}_m|} \lambda_{i,j,k} &= 1,
    &&\forall i,j \in \mathcal{N},\ i<j
    \label{eq:platonic-farkas-sum} \\
     \sum_{k=0}^{|\mathcal{F}_m|-1} \lambda_{i,j,k+1} \begin{pmatrix}a_{i,k}\\b_{i,k}\\c_{i,k}\end{pmatrix}
      + \sum_{k=0}^{|\mathcal{F}_m|-1} \lambda_{i,j,k+|\mathcal{F}_m|+1} \begin{pmatrix}a_{j,k}\\b_{j,k}\\c_{j,k}\end{pmatrix} &= \mathbf{0},
    &&\forall i,j \in \mathcal{N},\ i<j
    \label{eq:platonic-farkas-xyz} \\
     \sum_{k=0}^{|\mathcal{F}_m|-1} \lambda_{i,j,k+1}\, e_{i,k}
      + \sum_{k=0}^{|\mathcal{F}_m|-1} \lambda_{i,j,k+|\mathcal{F}_m|+1}\, e_{j,k} &\ge 0,
    &&\forall i,j \in \mathcal{N},\ i<j
    \label{eq:platonic-farkas-sep} \\[0.3em]
     0 \le \theta_i, \iota_i, \kappa_i &\le 2\pi,
    &&\forall i \in \mathcal{N}
    \label{eq:platonic-rotation} \\
     \lambda_{i,j,k} &\ge 0,
    &&\forall i,j \in \mathcal{N},\ i<j, \nonumber \\
    &&&1\leq k \leq 2|\mathcal{F}_m| \\
     R &\ge R_{\min}
    \label{eq:platonic-bounds}
\end{alignat}
\end{subequations}
Constraints \eqref{eq:platonic-containment} ensure that all vertices of each inner solid lie within the outer solid by
requiring all oriented vertices of inner solids to satisfy all constraints of the outer solid.

Constraints \eqref{eq:platonic-normal}--\eqref{eq:platonic-offset} define the oriented half-space representation of
each inner solid. For this, each face $f \in \mathcal{F}_m$ of inner solid $i$ is represented by the oriented inward
normal $(a_{i,f}, b_{i,f}, c_{i,f})$ and offset $e_{i,f}$, corresponding to the defining half-space constraint
$a_{i,f}x + b_{i,f}y + c_{i,f}z \ge e_{i,f}$.

Constraints \eqref{eq:platonic-farkas-sum}--\eqref{eq:platonic-farkas-sep} represent the Farkas-based separation
conditions as described above.

Constraints \eqref{eq:platonic-rotation} leave the rotation unrestricted since three dimensional structures in general
do not have congruence symmetries in each rotational coordinate in contrast to regular polygons in two dimensions.
Finally, the lower bound $R_{\min}$ in \eqref{eq:platonic-bounds} is again derived from the containment requirement
that the volume of the scaled outer $\ell$-solid must be at least the total volume of the $n$ unit inner $m$-solids.

\subsection{Computational Results} 
\label{subsec:platonicresults}

The resulting circumradii for all computed Platonic solid packing instances are listed in the online supplement\footnote{\url{https://github.com/DominikKamp/Packing}}. Here we list the solutions that are either improving or are new and of an interesting structure.

First, we have found an improving cube packing solution for the case of packing 11 cubes into a larger cube. The solution is depicted in Figure~\ref{fig:platonic_improving}. This is a well-researched area, so it is surprising that such a solution could be found.
\begin{figure}[ht]
\begin{center}
\includegraphics[width=0.35\linewidth]{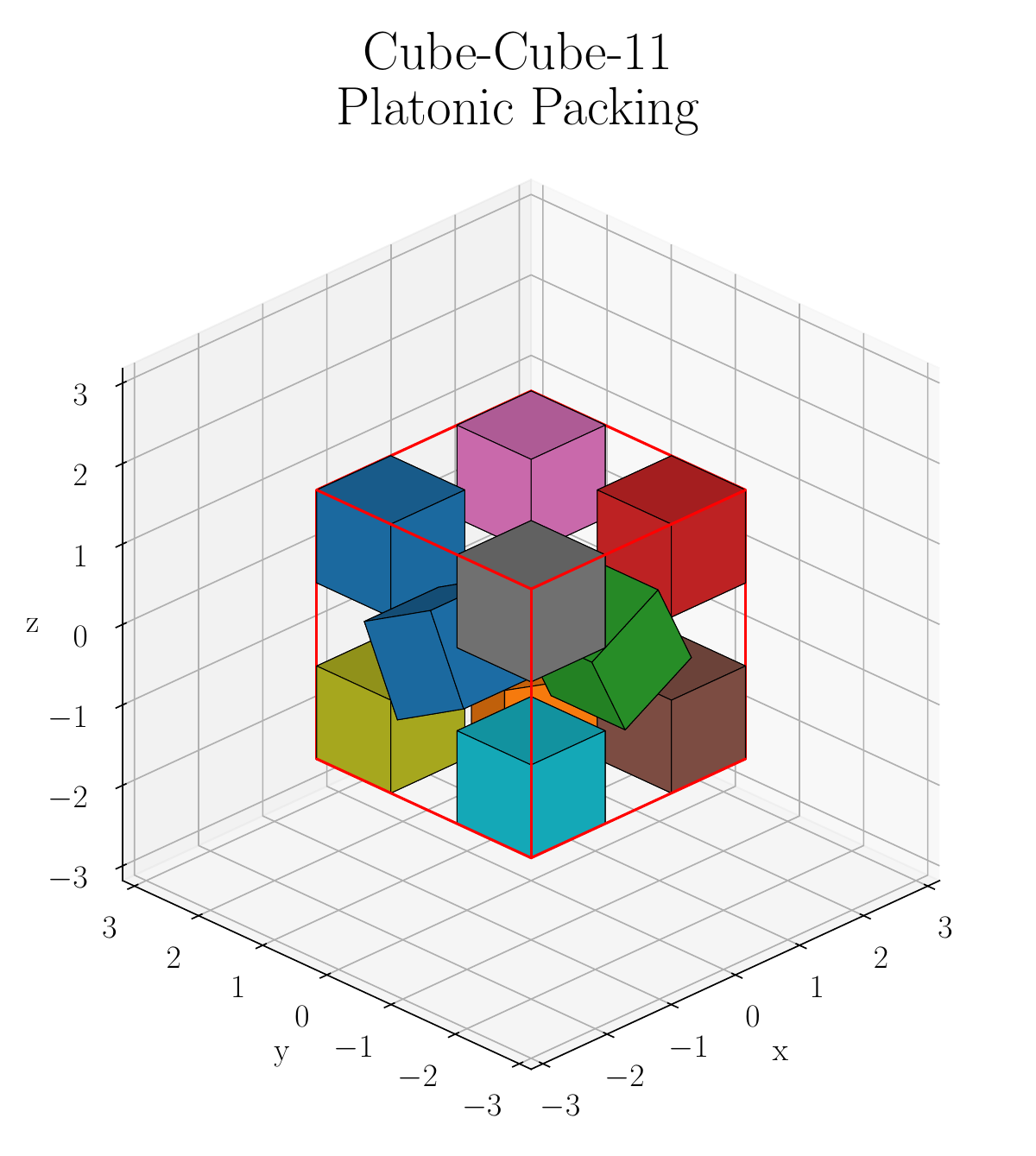}
\end{center}
\caption{Packing of 11 unit circumradius cubes into a cube with $R = 2.89445$}\label{fig:platonic_improving}
\end{figure}
The numerical details of the solution are summarized in Table~\ref{tab:platonic_improving}.
\begin{table}[ht]
\caption{Improving solution for packing 11 cubes into a cube}\label{tab:platonic_improving}
\begin{tabular}{@{}rrrllr@{}}
\toprule
$\ell$ & $m$ & $n$ & Circumradius & Previous & Source\\
\midrule
3 & 3 & 11 & 2.\textbf{89445} & 2.91210 & Friedman 1998 \\
\bottomrule
\end{tabular}
\end{table}

In addition we have tested all combinations of inner and outer polyhedra with up to 30 inner ones, resulting in $5\cdot5\cdot30=750$ instances. For most of these problems we are the first to report any feasible solutions. A few selected configurations are depicted in Figure~\ref{fig:platonic}. We find it remarkable that such tight configurations could be found with a straightforward model and a general-purpose solver.

\begin{figure}[ht]
\begin{center}
\includegraphics[width=0.30\linewidth]{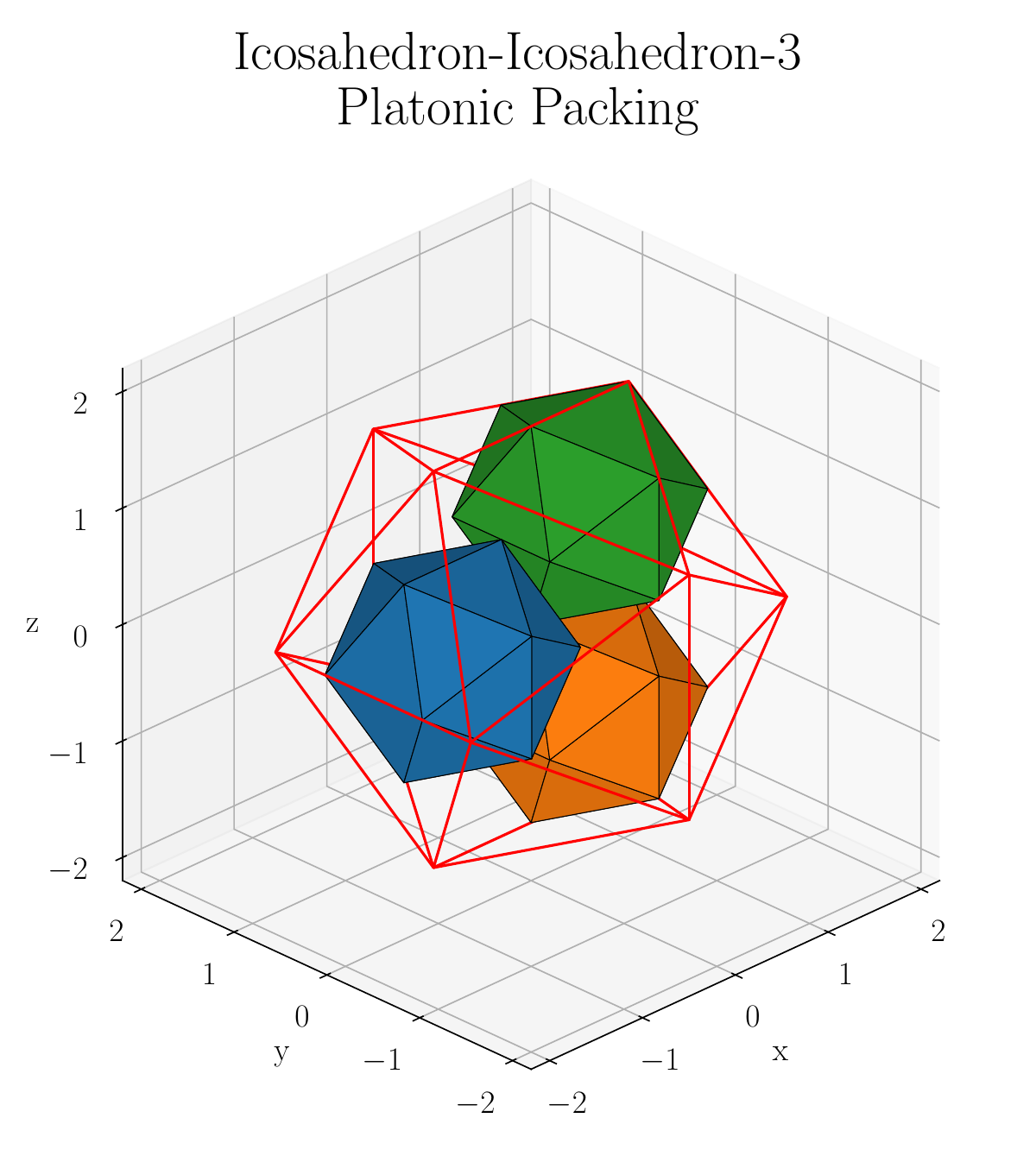}
\includegraphics[width=0.30\linewidth]{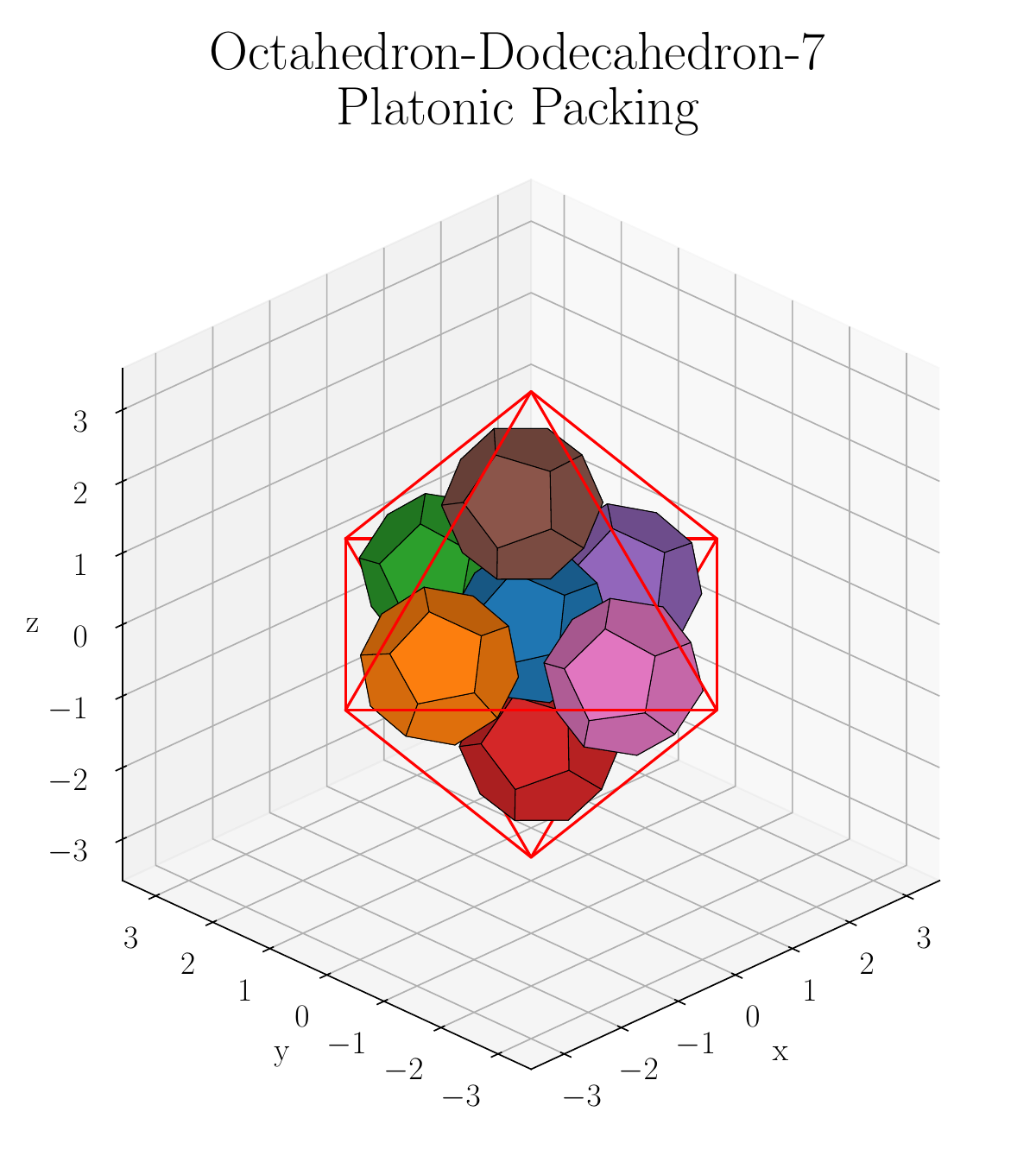}
\includegraphics[width=0.30\linewidth]{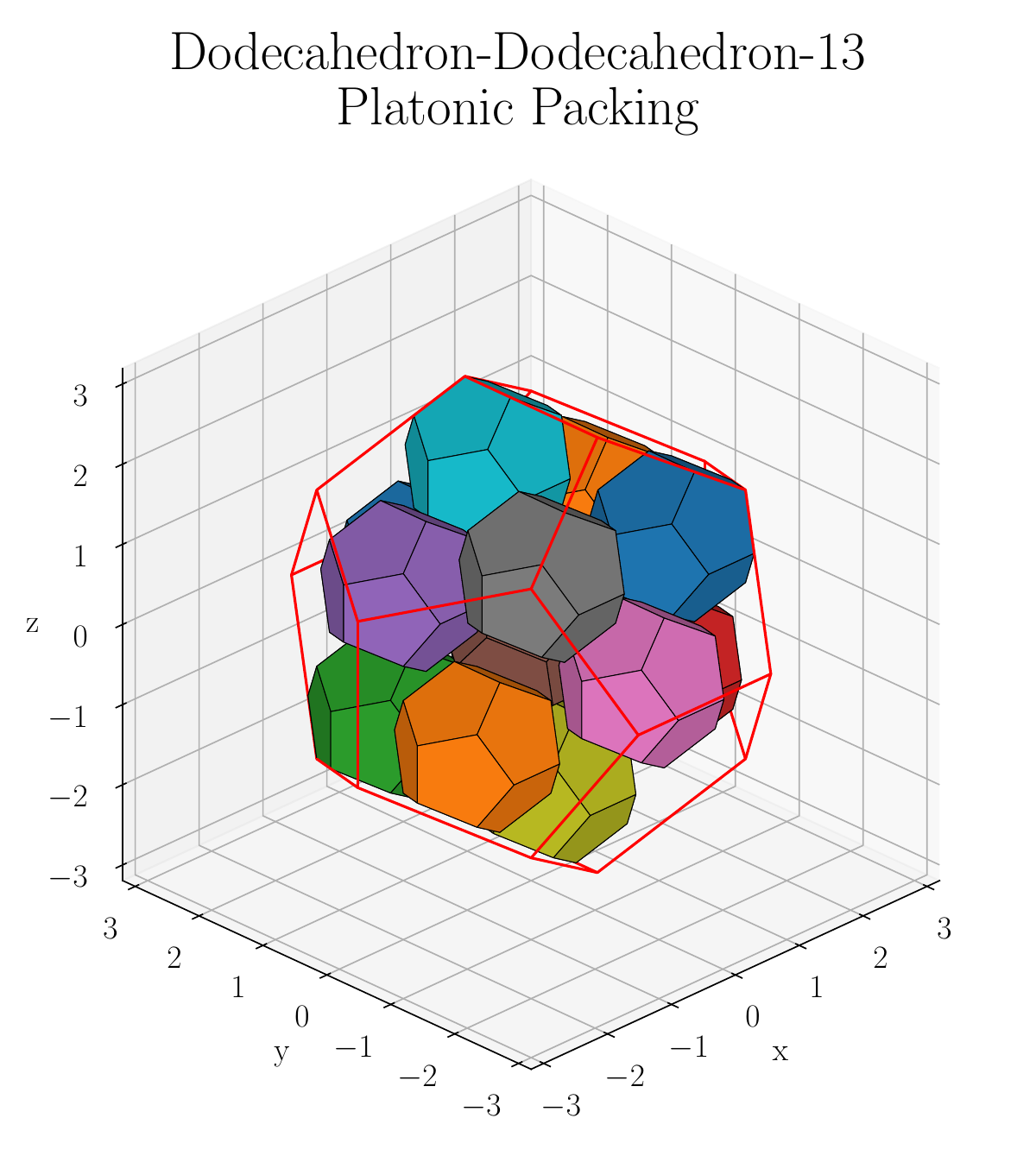}\\
\includegraphics[width=0.30\linewidth]{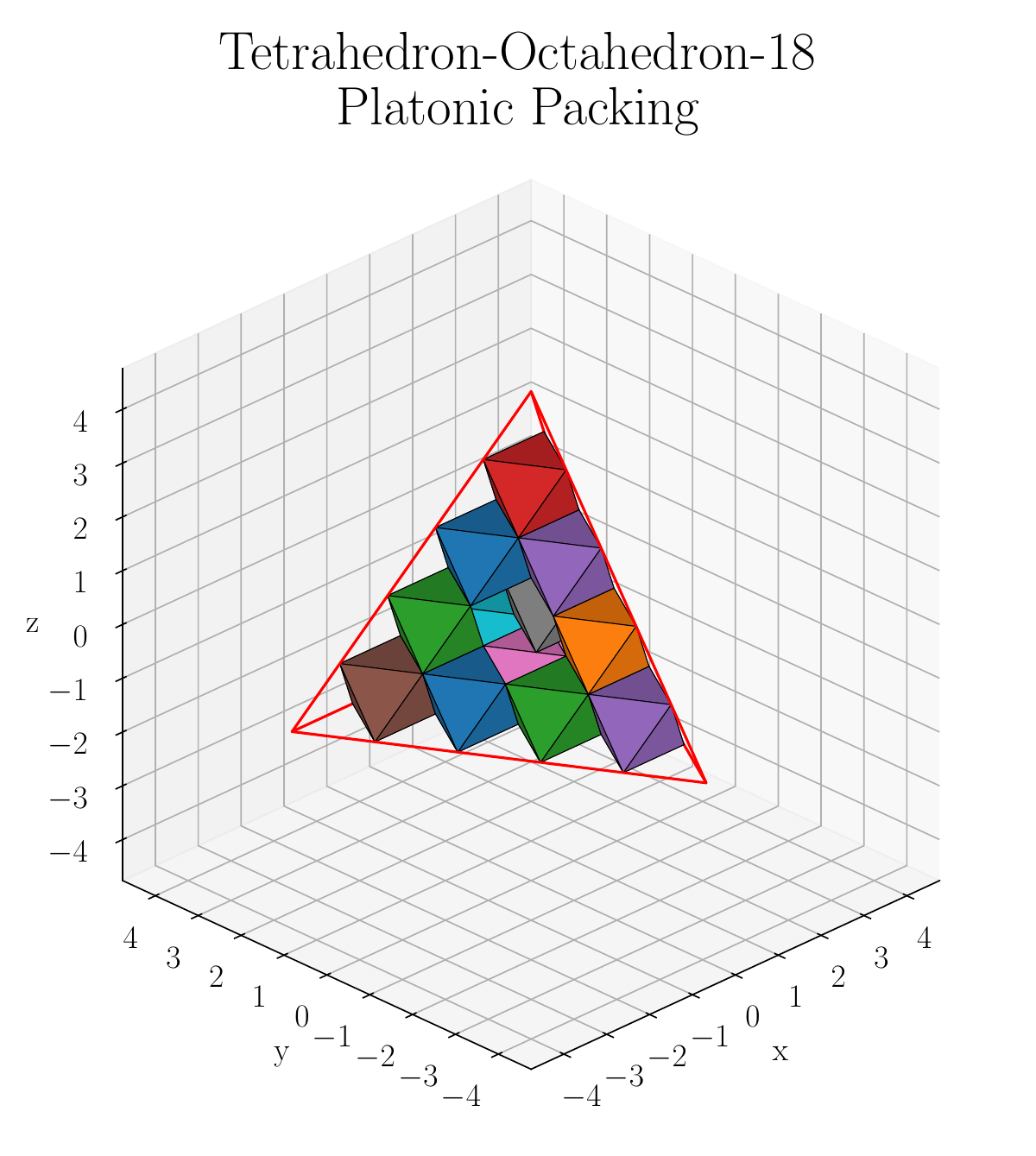}
\includegraphics[width=0.30\linewidth]{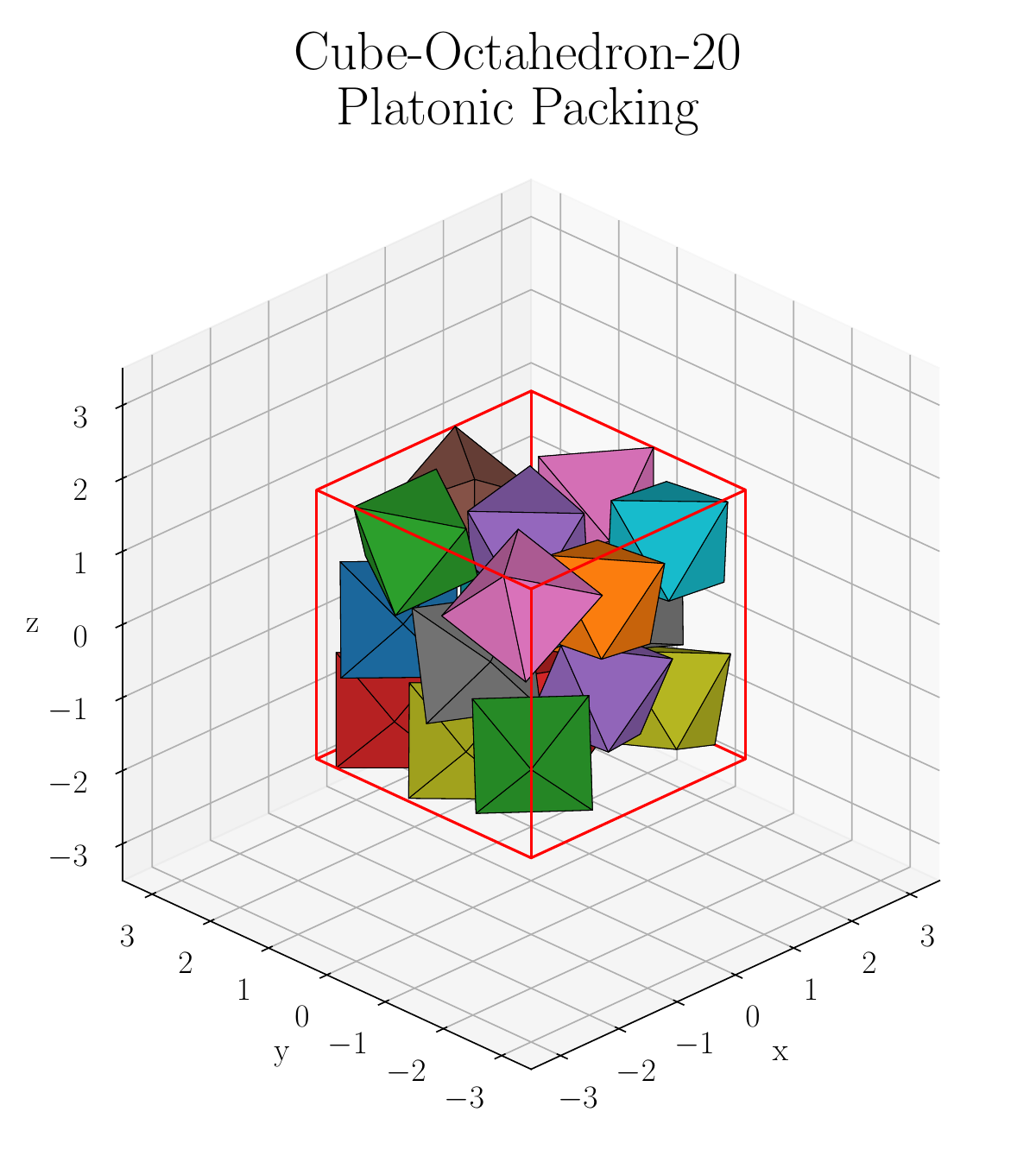}
\includegraphics[width=0.30\linewidth]{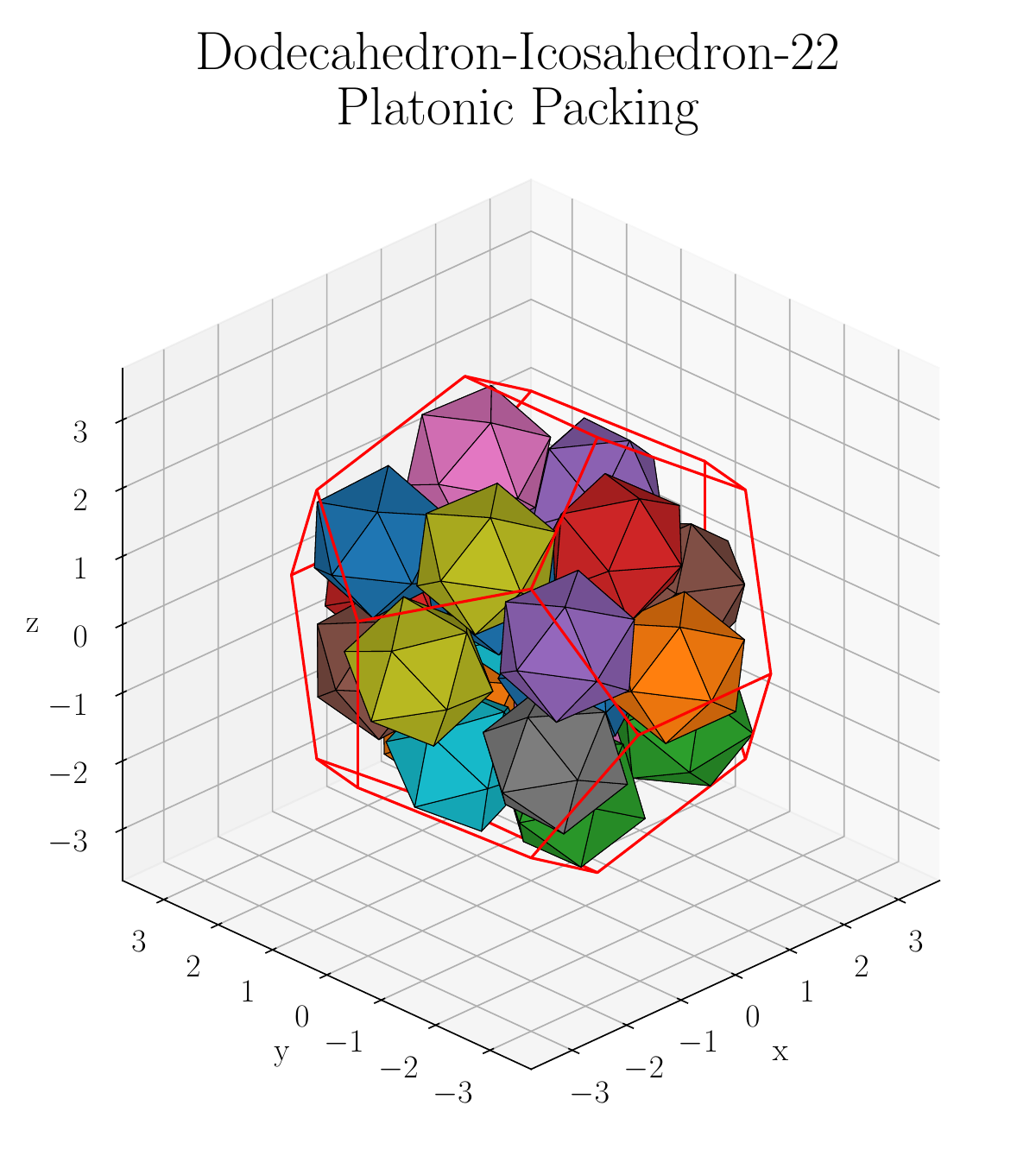}
\end{center}
\caption{Graphical representation of the solutions for selected instances of the Platonic solid packing problem}\label{fig:platonic}
\end{figure}

The median optimality gaps are significantly larger than for polygons, ranging from $14.5\%$ to $33.3\%$ (average $26.7\%$). The dual bound is improved beyond $R_{\min}$ only for $n \leq 2$, indicating that further model strengthening is necessary to close the gaps.

\subsubsection{Model Variants}\label{subsubsec:platonicvariants}

Similar to polygon packing, we have investigated the performance of the other formulation variants. The same five formulation variants described in Section~\ref{subsubsec:polygonvariants} are tested for Platonic solid packing, with each modification extended analogously to three dimensions.
\textsc{Dist} adds redundant center-distance constraints $(x_i{-}x_j)^2 + (y_i{-}y_j)^2 + (z_i{-}z_j)^2 \geq (2\rho_m)^2$;
\textsc{Inner} replaces Farkas separation with direct inner separation hyperplanes in~$\mathbb{R}^3$;
\textsc{Farkas} uses individual normalization $\sum_{k} \lambda_{i,j,k+1} \geq 1$ and $\sum_{k} \lambda_{i,j,k+|\mathcal{F}_m|+1} \geq 1$ in place of~\eqref{eq:platonic-farkas-sum};
and \textsc{Sym} extends \textsc{Dist} with $x_i \leq x_{i+1}$ ordering and centroid constraints into the cone spanned by the first half of the first angular sector of the first face. Table~\ref{tab:platonic_overall} reports the same metrics for the Platonic solid packing instances.
\begin{longtable}{rrrrrr}
  \caption{Platonic solid packing: overall formulation comparison across 750 instances. Solutions found, median and mean relative difference to \textsc{Dist}, and diff counts per formulation.} \label{tab:platonic_overall} \\
  \toprule
  metric & \textsc{Dist} & \textsc{Nodist} & \textsc{Inner} & \textsc{Farkas} & \textsc{Sym} \\
  \midrule
  \endfirsthead
  \multicolumn{6}{l}{\tablename\ \thetable{} -- \textit{continued}} \\
  \toprule
  metric & \textsc{Dist} & \textsc{Nodist} & \textsc{Inner} & \textsc{Farkas} & \textsc{Sym} \\
  \midrule
  \endhead
  \endfoot
  \bottomrule
  \endlastfoot
  found & 429 & 263 & 256 & 208 & 417 \\
  median & +0.000\% & +0.000\% & +2.610\% & +0.000\% & +0.763\% \\
  mean & +0.000\% & +0.836\% & +34.049\% & +1.263\% & +1.948\% \\
  better & 0 & 18 & 2 & 7 & 18 \\
  worse & 0 & 108 & 160 & 84 & 264 \\
\end{longtable}
The overall pattern resembles the polygon case, while reflecting the increased difficulty of the three-dimensional problem.
\textsc{Dist} again serves as reference.
Notably, both \textsc{Nodist} and \textsc{Sym} produce~18 instances each that are strictly better than \textsc{Dist}.
For Platonic packing, \textsc{Inner} faces the largest quality degradation ($+2.610\%$ median, $+34.0\%$ mean) and~160 strictly worse instances.
\textsc{Farkas} matches \textsc{Dist} in median quality but is strictly worse on~84 instances versus~7 where it is better.

This is a whole new and wide open area of research. We expect that some of the solutions we have found will soon be improved either by us or by other teams.

\section{Outlook and Conclusion}
\label{thats_it_folks}

In this paper we revisited a collection of geometric packing problems for which the community has been reporting best-known configurations over decades, and we obtained new best solutions for a substantial subset of them. In particular, we have improved best-known results for circle packing with variable radii in fixed-perimeter rectangles, for packing unit circles into minimum-area ellipses, for a broad range of regular polygon packing instances, and even for the well-studied case of packing $11$ cubes into a cube. For the polygon and the ellipse packing problems, we introduced new modeling methodologies, in particular, a practical use case for the S-lemma.

Beyond improving existing benchmarks, we also argued that some of the modeling ideas we used generalize naturally to higher-dimensional settings. In particular, our Farkas-based non-overlap formulation for polygon packing generalizes to polyhedron packing in higher dimensions, which motivated us to introduce the problem class of packing Platonic solids into Platonic solids, and to report high-quality solutions for a first collection of instances which we derived from our models. 

The central message, however, is not about packing. Rather, this work demonstrates that global nonlinear optimization has become an out-of-the-box technology: the improvements reported here were obtained with standard, general-purpose solvers (FICO Xpress and SCIP) on straightforward NLP formulations, without any problem-specific coding. Some of the authors have already argued about the importance of this point in \cite{orms26}.

In this sense, modern global solvers provide a competitive baseline for optimization problems involving nonlinearities, including the nonconvex geometric problems that have recently attracted attention through LLM-driven discovery. More than that, global optimization offers complementary strengths: it operates on transparent models and provides dual bounds, it generalizes when the problem to be solved changes slightly, and it does not require an expensive training/learning phase.

From a modeling perspective, we unsurprisingly found that formulation choices matter.
Our systematic comparison of five formulation variants for polygon and Platonic solid packing (Section~\ref{subsubsec:polygonvariants}) confirms the value of standard strengthening techniques.
The addition of redundant quadratic distance constraints (\textsc{Dist}) emerged as the single most impactful modification, substantially increasing solution quality.
These constraints are primarily thought to strengthen the dual bound, but we observed that this consequently also leads to stronger primal solutions, a behavior known from MIP~\cite{Berthold2006}.
Symmetry-breaking constraints (\textsc{Sym}) degraded solution quality.
Alternative Farkas normalizations (\textsc{Farkas}) and direct inner separation (\textsc{Inner}) did not improve upon the base formulation.

There are also clear caveats. The most immediate one is the quadratic growth in model size induced by pairwise non-overlap constraints. This becomes a computational obstacle when one moves towards instances with hundreds or thousands of objects. One simple way to mitigate that is to couple the non-intersection constraints with the symmetry breaking constraints that sort the objects left to right, and only add the non-overlap constraints if the objects are not too far from each other. A more complicated, but probably more efficient method is to  separate violated non-overlap constraints and other valid inequalities dynamically. Modern global solvers are already prepared for this line of research: Xpress offers callback mechanisms for branching, cutting, and various other solver interactions, and the plugin-based design of SCIP provides analogous extensibility.

Furthermore, the polygon and Platonic solid packing formulations could be further strengthened by: (1)~simplifying the separation constraints combinatorially by indicating the separating vertices through integer variables, as suggested by the sparsity result in Lemma~\ref{lem:polygon-separation}, (2)~exploiting the structure of the separation constraints in a cutting-plane framework to dynamically add violated constraints only for pairs that are close to each other, and (3)~density propagation, deriving lower bounds on~$R$ from area conditions on restricted element coordinates during spatial branch-and-bound.

Finally, while AlphaEvolve was a strong trigger to look at classical geometric benchmarks from a model-and-solve perspective, not every problem from that line of research~\cite{novikov2025alphaevolve,georgiev2025mathematical} is equally amenable to compact NLP modeling. The kissing number problem, for example, is conceptually simple to model but immediately suffers from the same combinatorial growth discussed above. 

On the other hand, our earlier work~\cite{berthold2026global} already demonstrated that global NLP solvers can be highly effective on distance-geometry style instances, and we are optimistic that further problems such as the Heilbronn problem (for first results on different variants, see~\cite{monji2025solving,sudermannmerx2026}) can be attacked in the same way.

Looking ahead, we expect the scope of problems, both academic research and industrial applications, for which a model-and-solve approach is competitive to expand rapidly with continued progress in global optimization technology. The mixed-integer linear programming success story suggests a plausible trajectory: with the right combination of general yet structure-aware cuts, presolving, propagation, and primal heuristics, computational MINLP can become a black-box tool that is routinely usable at scale. We should stop looking at \emph{global} MINLP solvers as generalizations of \emph{local} nonlinear solvers, and instead view them as extensions of linear MIP solvers: they offer the same guarantees (primal solution, dual bound), but can handle any kind of nonlinearity. This opens the doors to a much larger user base.

The recipe is, in principle, known: step back, mine the already existing rich literature on algorithms and theory for nonlinear optimization, and turn that research into robust, general-purpose solver technology. The packing case studies in this paper illustrate both that this direction is already paying off and that there remains substantial research potential. The beauty of research on general-purpose global optimization algorithms is its impact: improvement transfers across models and domains and can lead to outsized real-world impact.

\bmhead{Acknowledgements}

We are grateful to Liding Xu (ZIB) and the FICO Xpress team, in particular Bruno Vieira and Susanne Heipcke, for their contributions during an early phase of this project.
We thank Anita Hovanesian for her support with visualizing solutions.
We are indebted to Erich Friedman for his great collection of packing problems and benchmark solutions, which sparked this work, and for incorporating our new findings on his page.

\bmhead{Funding disclosure}
Research reported in this paper was partially supported through the Research Campus Modal funded by the German Federal Ministry of Education and Research (fund numbers 05M14ZAM, 05M20ZBM) and the Deutsche Forschungsgemeinschaft (DFG) through the DFG Cluster of Excellence MATH+.
\bibliography{bibliography}

\backmatter

\end{document}